%
%
%

\documentclass{amsart}
\usepackage{amssymb, url}

\newtheorem{theorem}{Theorem}[section]
\newtheorem{proposition}[theorem]{Proposition}
\newtheorem{lemma}[theorem]{Lemma}
\newtheorem{claim}[theorem]{Claim}
\newtheorem{corollary}[theorem]{Corollary}

\theoremstyle{definition}
\newtheorem{definition}[theorem]{Definition}

\newtheorem{question}[theorem]{Question}

\newcommand{\U}{\mathcal U}

\newcommand{\A}{\mathcal A}
\newcommand{\C}{\mathcal C}
\newcommand{\B}{\mathcal B}
\newcommand{\F}{\mathcal F}
\newcommand{\N}{\mathcal N}
\newcommand{\w}{\omega}

\newcommand{\Sym}{\mathrm{Sym}}
\newcommand{\cf}{\mathrm{cf}}

\newcommand{\la}{\langle}
\newcommand{\ra}{\rangle}

\newcommand{\uhr}{\upharpoonright}

\newcommand{\LL}{\mathcal L}
\newcommand{\hot}{\mathfrak}

\title[Chains of proper subgroups covering a topological group]{On the length of  chains of proper subgroups covering a topological group}

\author[T.~Banakh, D.~Repov\v{s}, and  L.~Zdomskyy]{Taras Banakh, Du\v{s}an Repov\v{s}, and Lyubomyr Zdomskyy}

\address{Department of Mathematics, Ivan Franko National University of Lviv, Ukraine; and\\
Instytut Matematyki, Uniwersytet Humanistyczno-Przyrodniczy Jana
Kochanowskiego w Kielcach, Poland.} \email{tbanakh@yahoo.com}
\urladdr{http://www.franko.lviv.ua/faculty/mechmat/Departments/Topology/bancv.html}

\address{Faculty of Mathematics and Physics, and Faculty of Education, University of Ljubljana, P. O. Box 2964, Ljubljana, Slovenija 1001.}
\email{dusan.repovs@guest.arnes.si}
\urladdr{http://www.fmf.uni-lj.si/\~{}repovs/index.htm}

\address{Kurt G\"odel Research Center for Mathematical Logic,
University of Vienna, W\"ahringer Stra\ss e 25, A-1090 Wien,
Austria.} \email{lzdomsky@gmail.com}
\urladdr{http://www.logic.univie.ac.at/\~{}lzdomsky/}

\subjclass[2000]{Primary:  03E17, 54H11; Secondary:  54D20.}
\keywords{$Q$-point, $P_\kappa$-point, $\sigma$-bounded group,
$\omega$-bounded group, Menger property, $[\F]$-Menger property.}
\thanks{
This research was supported in part by the Slovenian Research Agency
grants P1-0292-0101, J1-2057-0101 and BI-UA/09-10-005. The third
author acknowledges the support of the FWF grant P19898-N18.
We thank the referees for comments and suggestions.}
\date{\today}
\begin{document}

\begin{abstract}
We prove
that if an ultrafilter $\LL$ is
not coherent to a $Q$-point, then
 each  analytic non-$\sigma$-bounded
 topological group $G$ admits an increasing
chain $\la G_\alpha:\alpha<\hot b(\LL)\ra$ of its proper
  subgroups such that: $(i)$\  $\bigcup_{\alpha}G_\alpha=G$; and
  $(ii)$ For every $\sigma$-bounded subgroup $H$ of $G$ there exists $\alpha$ such
 that $H\subset G_\alpha$.
In case of the group $\Sym(\w)$ of all permutations of $\w$
with the topology inherited from $\w^\w$ this
improves upon earlier results  of  S.~Thomas.
\end{abstract}

\maketitle

\section{Introduction}

A  theorem of Macpherson and Neumann \cite{MN90} states that if
the group $\Sym(\w)$ can  be written as a union of an increasing chain $\la
G_i:i<\lambda\ra$ of  proper subgroups $G_i$, then $\lambda>\w$.
Throughout this paper the minimal $\lambda$ with this property will
be denoted by $\cf(\Sym(\w))$. For every increasing function
$f\in\w^\w$ we denote by $S_f$ the subgroup of $\Sym(\w)$ generated
by $\{\pi\in\Sym(\w): \pi,\pi^{-1}\leq^\ast f\}$, where $x\leq^\ast
y$ means that $x(n)\leq y(n)$ for all but finitely many $n\in\w$.
 If
we additionally require that for every $f\in\w^\w$ there exists
$i\in\lambda$ such that $S_f\subset G_i$, then the minimal length of
such a chain will be denoted by $\cf^\ast(\Sym(\w))$. It is clear
that $\cf^\ast(\Sym(\w))\geq\max\{ \cf(\Sym(\w)),\mathfrak b\}$. The
consistency of $\cf^\ast(\Sym(\w))> \cf(\Sym(\w))$ and the
inequality $\cf^\ast(\Sym(\w))\leq \cf(\mathfrak d)$
 were established in \cite[Proposition~2.5]{To98}. The initial aim of this paper
was to sharpen
 the latter upper bound on $\cf^\ast(\Sym(\w))$. This led us to
 consider increasing chains of proper submonoids of  topological monoids.

We recall that
a {\em semigroup\/}
is a set  with a binary
associative operation $\cdot:X\times X\to X$.  A semigroup with a
two-sided unit  $1$ is called a {\em monoid}. It is clear that each
group is a monoid. By a {\em topological monoid} we understand a
monoid $X$  with a topology $\tau$ making
 the binary operation $\cdot:X\times X\to X$ of $X$ continuous.

\begin{definition} \label{de_cfp}

Let $X$ be a topological monoid (resp. group). The minimal length of
an increasing chain $\la X_i:i<\lambda\ra$ of  proper submonoids
(resp. subgroups) $X_i$ of $X$ such that $ X=\bigcup_{i<\lambda}X_i$
and for every $\sigma$-bounded subset $H$ of $X$
 there exists $i\in\lambda$ such that $H\subset X_i$ will be denoted by
$\cf^\ast_m(X)$ (resp. $\cf^\ast_g(X)$).
\end{definition}

We recall that a subset $B$ of a topological monoid $X$ is said to
be \emph{totally bounded,} if for every  open neighborhood $U$ of
the identity $1$ of $X$ there exists a finite subset $F$ of $X$ such
that $X\subset FU\cap UF$.
A subset $B$ is said to be \emph{$\sigma$-bounded,} if it can be
written as a countable union of  totally bounded subsets.  A direct
verification shows that $\cf^\ast(\Sym(\w))$ as defined in
\cite{To98} and $\cf^\ast_g(\Sym(\w))$ in the sense of our
Definition~\ref{de_cfp} coincide.

It is clear that $\cf^\ast_m(X)\le \cf^\ast_g(X)$ for every
topological group $X$. We do not know whether these cardinals can be
different. Probably the most interesting case is the group
$\Sym(\w)$.

Let  $R$ be a relation on  $\w$ and $x,y\in\w^\w$. We denote by
$[x\,R\,y]$ the set $\{n\in\w:x(n)\,R\, y(n)\}$. For an ultrafilter
$\F$ the notation  $x\leq_\F y$ means $[x\leq y]\in\F$. Let
$\mathfrak b(\F)$ be the cofinality of the linearly ordered set
$(\w^\w,\leq_\F)$.

Following \cite{BRZ} we define  a point $x\in X$ of a topological
monoid $X$ to be {\em left balanced\/} (resp. {\em right
balanced\/}) if for every neighborhood $U\subset X$ of the unit $1$
of $X$ there is a neighborhood $V\subset X$ of $1$ such that
$Vx\subset xU$ (resp. $xV\subset Ux$). Observe that $x$ is left
balanced if the left shift $l_x:X\to X$, $l_x:y\mapsto xy$, is open
at $1$. Let $B_{L}$ and $B_R$ denote respectively the sets of all
left and right balanced points of the monoid $X$. A topological
monoid $X$ is defined to be {\em left balanced} (resp. {\em right
balanced}) if $X= B_L\cdot U$ (resp. $X= U\cdot B_R$) for every
neighborhood $U\subset X$ of the unit $1$ in $X$. If a topological
monoid $X$ is both left and right balanced, then we say that $X$ is
{\em balanced}.

We define a topological monoid $X $ to be a Menger
monoid\footnote{In terms of \cite{BRZ} this means that $(X,
\mu_L\wedge\mu_R)$ is a Menger monoid.}, if for every sequence $\la
U_n:n\in\w\ra$ of open neighborhoods of $1$ there exists a sequence
$\la F_n:n\in\w\ra$ of finite subsets of $X$ such that
$X=\bigcup_{n\in\w} F_n U_n\cap U_n F_n$. A topological monoid $X$
is said to be \emph{$\w$-bounded}, if for every neighborhood $U$ of
$1$ there exists a countable $C\subset X$ such that $X=C\cdot U$.

The following two  theorems are the principal results of this paper.

\begin{theorem} \label{main_mon}
Let $X$ be a first countable $\w$-bounded balanced topological
monoid such that one of its finite powers is not
a Menger monoid. Then $\cf^\ast_m(X)\leq \mathfrak b(\LL)$ for every
ultrafilter $\LL$ which is not coherent to any $Q$-point.
\end{theorem}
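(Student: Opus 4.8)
The plan is to obtain the chain by pulling back, along a suitable ``size function'' $X\to\w^\w$ manufactured from the $\w$-boundedness of $X$, a $\leq_\LL$-cofinal $\leq_\LL$-increasing sequence in $(\w^\w,\leq_\LL)$, and then to devote the bulk of the work to showing that the resulting submonoids are proper; this last point is where the failure of the Menger property in a finite power of $X$ and the non-coherence of $\LL$ to a $Q$-point are used.

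First I would fix $N$ with $X^N$ not a Menger monoid, a decreasing neighbourhood base $\la V_k:k\in\w\ra$ at the unit $1$ of $X$, and, using $\w$-boundedness together with balancedness, for each $k$ a countable set $\{d_{k,j}:j\in\w\}\subset X$ with $X=\bigcup_{j}d_{k,j}V_k\cap V_kd_{k,j}$. Let $\Phi\colon X\to\w^\w$ be $\Phi(x)(k)=\min\{j:x\in d_{k,j}V_k\cap V_kd_{k,j}\}$. I will use two features of $\Phi$: (a) if $B\subset X$ is totally bounded then $\Phi(B)$ is bounded in every coordinate, hence $\Phi(B)\leq^\ast g$ for a single $g$, so any $\sigma$-bounded $H\subset X$ admits $g\in\w^\w$ with $\Phi(x)\leq^\ast g$ for all $x\in H$; and (b) since $X$ is balanced, $\Phi$ is sub-compositional up to a fixed correction coming from the base, i.e.\ $\Phi(xy)\leq^\ast\Phi(x)\circ\Phi(y)$ (with all the $f_\alpha$ below taken increasing), so displacements under products compose rather than merely add. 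Then I fix a $\leq_\LL$-strictly-increasing $\leq_\LL$-cofinal sequence $\la f_\alpha:\alpha<\hot b(\LL)\ra$ of increasing functions --- possible since $\leq_\LL$ linearly preorders $\w^\w$ with cofinality $\hot b(\LL)\geq\w_1$ --- and set
\[
G_\alpha=\big\la\{x\in X:\Phi(x)\leq_\LL f_\alpha\}\big\ra,
\]
the submonoid generated by the $f_\alpha$-bounded elements; by (b) it is contained in $\{x:\Phi(x)\leq_\LL f_\alpha^{(m)}\text{ for some }m\in\w\}$, where $f_\alpha^{(m)}$ denotes the $m$-fold composition.

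The easy verifications are then: $G_\alpha$ is a submonoid by construction; $G_\alpha\subset G_\beta$ for $\alpha<\beta$ because $f_\alpha\leq_\LL f_\beta$ nests the generating sets; $\bigcup_{\alpha}G_\alpha=X$ because each $x\in X$ has $\Phi(x)\in\w^\w$, hence $\Phi(x)\leq_\LL f_\alpha$ for some $\alpha$ by cofinality; and if $H\subset X$ is $\sigma$-bounded then by (a) there is $g$ with $\Phi(x)\leq^\ast g$ for all $x\in H$, while $g\leq_\LL f_\alpha$ for some $\alpha$, so (since $\leq^\ast$ implies $\leq_\LL$) $H\subset\{x:\Phi(x)\leq_\LL f_\alpha\}\subset G_\alpha$. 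Thus only properness of the $G_\alpha$ remains.

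The hard part will be showing each $G_\alpha$ is proper. Fixing $\alpha$ and assuming $G_\alpha=X$, one must contradict $X=\bigcup_{m\in\w}\{x:\Phi(x)\leq_\LL f_\alpha^{(m)}\}$, and I would do this by deriving that $X^N$ is a Menger monoid. Non-coherence of $\LL$ to a $Q$-point supplies, from the base $\la V_k\ra$, an interval partition $\la I_j:j\in\w\ra$ of $\w$ whose image ultrafilter is not a $Q$-point, which forces every $\LL$-large set of coordinates to be ``spread out'' along $\la I_j\ra$; this is exactly what is needed to turn a bound $\Phi(x)\leq_\LL f_\alpha^{(m)}$ into a covering of $\{x:\Phi(x)\leq_\LL f_\alpha^{(m)}\}$ by finitely many translates of a single $V_k$ indexed along $\la I_j\ra$ with ``spread'' bounded in terms of $m$, and then to assemble these coverings over $m\in\w$ (using $\w$-boundedness on the partition-small, hence $\LL$-small, set of remaining coordinates) into a witness of the Menger property of $X$; running the same construction in $X^N$ --- here a genuine power is needed, since recombining $N$ coordinates of a non-Menger witness for $X^N$ into a single element of $X$ escaping $G_\alpha$ requires $N$ independent coordinates --- produces a Menger witness for $X^N$, contradicting the hypothesis. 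The core obstacle is making this interplay quantitatively precise: pinning down the correction term in (b), the exact sense in which non-coherence to a $Q$-point bounds the ``spread'', and how the finite-power non-Menger witness is recombined. Once that Macpherson--Neumann-type lemma is in place, the contradiction is immediate and the theorem follows.
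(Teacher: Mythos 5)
Your overall architecture is the same as the paper's: build an increasing chain of submonoids generated by sets of elements whose ``displacement function'' is $\leq_\LL$-bounded by the $\alpha$-th term of a scale, get absorption of $\sigma$-bounded sets from coordinatewise boundedness, and reduce properness to the non-Menger finite power. But there are two genuine gaps, and the second one is the entire mathematical content of the theorem.

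First, your claim (b), $\Phi(xy)\leq^\ast\Phi(x)\circ\Phi(y)$, is unjustified and does not even typecheck: $\Phi(y)(k)$ is an index $j$ into the enumeration $\{d_{k,j}:j\in\w\}$, not a neighbourhood index $k$, so composing $\Phi(x)$ with $\Phi(y)$ mixes two different roles. More substantively, the enumerations $d_{k,j}$ are arbitrary, so nothing bounds the index of a translate covering $d_{k,j}d_{k,j'}$ in terms of $j$ and $j'$; balancedness controls how neighbourhoods move past points, not how the countable dense sets interact with products. The paper avoids this entirely: it never tries to bound $\Phi$ on products. Instead it defines $X_\alpha$ as the submonoid generated by an explicitly described set $Y_{\phi_\alpha,b_\alpha}$ and invokes Lemma~\ref{tt} (from \cite{BRZ}): for $\LL$ not coherent to a $Q$-point, the submonoid generated by an $[\LL]$-Menger subspace is itself $[\LL]$-Menger. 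Since the $[\LL]$-Menger property passes to finite powers and $X^k$ is not Menger, $X_\alpha\neq X$. That lemma is precisely the ``Macpherson--Neumann-type lemma'' you defer to the end, and your sketch of it (interval partitions, ``spread'', recombining a non-Menger witness for $X^N$) is too vague to check and is not actually carried out; you concede as much when you say the core obstacle remains. Without it the properness of $G_\alpha$ is unproved.

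Second, your generating sets fix the neighbourhood index: $\{x:\Phi(x)\leq_\LL f_\alpha\}$ only records which translates of $V_k$ at level $k$ capture $x$. To verify the $[\LL]$-Menger property of such a set against an \emph{arbitrary} sequence of neighbourhoods one must reindex along a monotone surjection, and this is why the paper's $Y_{\phi,b}$ carries a second parameter $\phi$ ranging over a $\hot q(\LL)$-scale, with the sets $\{c_{k,m}:\phi(n)\leq k\leq n,\ m\leq b(n)\}$ and the absorption property $FU_{n+1}\cap U_{n+1}F\subset F'U_n\cap U_nF'$ built into the $C_n$. The fact that the two scales can be indexed by the same ordinal uses $\hot b(\LL)=\hot q(\LL)$, which itself is a consequence of $\LL$ not being coherent to a $Q$-point; this two-parameter structure, and the cardinal identity behind it, are absent from your construction and cannot be recovered from the single map $\Phi$ alone.
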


\begin{theorem} \label{main_gr}
Let $G$ be an $\w$-bounded topological group such that one of its
finite powers  is not
a Menger monoid. Then $\cf^\ast_g(G)\leq \mathfrak b(\LL)$ for every
ultrafilter $\LL$ which is not coherent to any
$Q$-point.
\end{theorem}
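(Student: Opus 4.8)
The strategy is to reduce the group case to the monoid case already handled by Theorem \ref{main_mon}. First observe that an $\w$-bounded topological group $G$ is automatically a balanced topological monoid: in a group every point $x$ is both left and right balanced, since the left shift $l_x$ (resp. right shift) is a homeomorphism, hence open at $1$; thus $B_L=B_R=G$ and trivially $G=B_L\cdot U=U\cdot B_R$ for every neighborhood $U$ of $1$. So the only hypothesis of Theorem \ref{main_mon} that is not immediately available is first countability. I would therefore pass to a suitable quotient. Since some finite power $G^{n}$ of $G$ fails to be a Menger monoid, fix a sequence $\la U_k:k\in\w\ra$ of open neighborhoods of $1$ in $G^{n}$ witnessing this; we may assume each $U_k$ is symmetric and refines a countable neighborhood base of a closed normal subgroup $N\le G^{n}$, namely $N=\bigcap_k U_k$ chosen so that $G^{n}/N$ is first countable. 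Here one uses the Birkhoff--Kakutani-type construction: from a countable family of neighborhoods of $1$ one produces a continuous left-invariant pseudometric, whose "zero set" $N$ is a closed subgroup, and by working with conjugates one arranges $N$ normal. The point is that the quotient $H:=G^{n}/N$ is then a first countable, $\w$-bounded topological group (the continuous open homomorphic image of such), it is balanced as noted above, and it is still \emph{not} a Menger monoid, because the failure of the Menger property was witnessed by neighborhoods coming from $N$.

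Next I would apply Theorem \ref{main_mon} to $H$: for every ultrafilter $\LL$ not coherent to a $Q$-point we get an increasing chain $\la H_i:i<\lambda\ra$ of proper submonoids with $\lambda\le\hot b(\LL)$, $\bigcup_i H_i=H$, and every $\sigma$-bounded subset of $H$ contained in some $H_i$. The task is now to pull this back to a chain of proper \emph{subgroups} of $G$. Let $q:G^{n}\to H$ be the quotient homomorphism and $p:G^{n}\to G$ the projection to the first coordinate (or, more symmetrically, work with $G\hookrightarrow G^{n}$ diagonally; the cleanest choice is to note $G$ embeds as a closed subgroup of $G^{n}\cong G\times G^{n-1}$ and compose). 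Set $G_i:=p\big(q^{-1}(\la H_i\ra_{grp})\big)\cap G$ after replacing each $H_i$ by the subgroup it generates — but here a subtlety intervenes, addressed below. Modulo that, $\la G_i\ra$ is increasing, covers $G$ (since the $H_i$ cover $H$ and $q,p$ are onto), each $G_i$ is a subgroup, and $\sigma$-boundedness is preserved because $q$ and $p$ are continuous and open, so the image of a $\sigma$-bounded set is $\sigma$-bounded and its preimage absorbs $\sigma$-bounded sets appropriately. The chain has length $\lambda\le\hot b(\LL)$ as required.

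The main obstacle is ensuring the $G_i$ remain \emph{proper} after passing from submonoids to subgroups and through the (pre)images: a priori the subgroup generated by a proper submonoid of $H$ could be all of $H$, and $q^{-1}$ of a proper subgroup need not stay proper after intersecting with a copy of $G$. The remedy is to go back into the proof of Theorem \ref{main_mon} rather than using it as a black box at this point: there the submonoids $H_i$ are built so that each is disjoint from a fixed "large" set (coming from the non-Menger witness, e.g. a set of the form $\{x: x \text{ eventually dominates } f_i\}$), and in a group this construction can be run to produce subgroups directly, since the relevant large sets are symmetric and closed under the group operations one needs to avoid. Concretely, one uses that in an $\w$-bounded group the obstruction to the Menger property localizes to an unbounded family in some $\w^\w$-like gauge, and the subgroups $S_f$-style construction (as in the $\Sym(\w)$ case recalled in the introduction) yields proper subgroups automatically. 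So the real content is: repeat the mechanism of Theorem \ref{main_mon} in the group setting, verifying that every place where a "proper submonoid" was produced actually produces a "proper subgroup" because the avoided sets are two-sided-invariant and the group is balanced for free. Once this bookkeeping is done, properness, the covering property, and the $\sigma$-bounded-absorption property all transfer, and the bound $\cf^\ast_g(G)\le\hot b(\LL)$ follows.
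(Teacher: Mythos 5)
Your overall plan --- pass to a first countable continuous homomorphic image of the group and rerun the monoid construction so that it produces subgroups rather than submonoids --- is the right one, and your observation that a topological group is automatically a balanced monoid is correct. But both of the steps you yourself flag as delicate contain genuine gaps. First, the reduction: you make $G^{n}$ first countable by quotienting and then try to return to $G$ via the coordinate projection $p:G^{n}\to G$. The image under $p$ of a proper subgroup of $G^{n}$ can be all of $G$ (take any proper subgroup containing $G\times\{1\}^{n-1}$), so $p\big(q^{-1}(\cdot)\big)\cap G$ does not preserve properness and the chain you produce need not consist of proper subgroups of $G$. The paper avoids this by reducing $G$ itself, not $G^{n}$: by Guran's theorem $G$ embeds into a product $\prod_{i\in I}Q_i$ of second countable groups, and one finds a countable $J\subset I$ such that some finite power of $H=\mathrm{pr}_J(G)$ is already non-Menger; then the preimages $\mathrm{pr}_J^{-1}(H_\alpha)$ of proper subgroups of $H$ are automatically proper subgroups of $G$, and they absorb the relevant ($\sigma$-bounded, resp.\ $[\F]$-Menger) subsets because $\mathrm{pr}_J$ is a continuous homomorphism.

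Second, and more importantly, your justification of the properness of the generated subgroups is not the actual mechanism and would not go through as described. The submonoids in the proof of Theorem~\ref{main_mon} are not built so as to ``avoid a fixed large two-sided-invariant set''; they are the submonoids generated by certain $[\LL]$-Menger sets $Y_{\phi_\alpha,b_\alpha}$, and properness comes from Lemma~\ref{tt}: a topological group (or balanced monoid) generated by an $[\LL]$-Menger subspace is itself $[\LL]$-Menger, \emph{provided} $\LL$ is not coherent to a $Q$-point. Combined with the preservation of the $[\LL]$-Menger property under finite powers and the hypothesis that some $H^k$ is not a Menger monoid, this is what forces each generated subgroup to be proper. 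This is precisely the (only) place where the hypothesis on $\LL$ enters the proof, and it is absent from your argument; since by \cite[Theorem~6.4]{BRZ} the conclusion of Lemma~\ref{tt} can fail without that hypothesis, the omission is not merely expository but leaves the properness claim unproved.
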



Applying \cite[Proposition~7.5]{BRZ} we conclude that the Baire
space  $\w^\w$ with
the operation of composition is a balanced topological monoid,
and  $\sigma$-bounded subsets of this topological monoid are exactly
those which are contained in the $\sigma$-compact subsets of $\w^\w$.
It is easy to see that $\w^\w$ is not a Menger monoid.
Thus we get the following

\begin{corollary} \label{crt}
Let $\LL$ be an ultrafilter coherent to no $Q$-point. Then $\w^\w$
can be written as the union of an increasing chain of its proper
subsets of length $\leq\hot b(\LL)$, each of which is closed under
composition, and such that every $\sigma$-compact subset of $\w^\w$
is contained in one of the elements of this chain.
\end{corollary}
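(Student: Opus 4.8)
The plan is to obtain Corollary~\ref{crt} as a direct application of Theorem~\ref{main_mon} to the topological monoid $X=(\w^\w,\circ)$ with the operation of composition. The first step is therefore to verify the hypotheses of Theorem~\ref{main_mon} for this $X$. That $X$ is a topological monoid with two-sided unit $\id$ is immediate, since composition is associative and jointly continuous for the product topology on $\w^\w$; and $X$ is first countable because $\w^\w$ is metrizable. That $X$ is balanced, and that its $\sigma$-bounded subsets coincide with the subsets of $\sigma$-compact subsets of $\w^\w$, has already been recorded above via \cite[Proposition~7.5]{BRZ}.

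Next I would check that $X$ is $\w$-bounded. Given a basic neighborhood $U_n=\{g\in\w^\w:g\uhr n=\id\uhr n\}$ of $\id$, for $s\in\w^n$ let $\phi_s\in\w^\w$ be defined by $\phi_s\uhr n=s$ and $\phi_s(n+j)=j$ for $j\in\w$; the set $C=\{\phi_s:n\in\w,\ s\in\w^n\}$ is countable and witnesses $\w^\w=C\cdot U_n$, since for every $h\in\w^\w$ one has $h=\phi_{h\uhr n}\circ g$ where $g$ is given by $g\uhr n=\id\uhr n$ and $g(n+j)=n+h(n+j)$, and this $g$ lies in $U_n$. I would then recall, as the paragraph preceding the corollary does, that $X$ already in its first power fails to be a Menger monoid: for $U_n=\{g:g\uhr(n+1)=\id\uhr(n+1)\}$ any $h\in F_nU_n$ satisfies $h\uhr(n+1)\in\{\phi\uhr(n+1):\phi\in F_n\}$, so a coordinatewise recursive construction, avoiding at each step the finitely many forbidden values, produces an $h\in\w^\w$ lying outside $\bigcup_{n\in\w}F_nU_n$ for any sequence $\la F_n:n\in\w\ra$ of finite sets.

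With the hypotheses in place, Theorem~\ref{main_mon} gives $\cf^\ast_m(X)\leq\hot b(\LL)$ for the ultrafilter $\LL$ at hand; unravelling Definition~\ref{de_cfp}, this means there is an ordinal $\lambda\leq\hot b(\LL)$ and an increasing chain $\la X_i:i<\lambda\ra$ of proper submonoids of $(\w^\w,\circ)$ with $\bigcup_{i<\lambda}X_i=\w^\w$ such that every $\sigma$-bounded subset of $(\w^\w,\circ)$ is contained in some $X_i$. Each $X_i$ is in particular a proper subset of $\w^\w$ closed under composition, and every $\sigma$-compact subset of $\w^\w$ is $\sigma$-bounded in $(\w^\w,\circ)$ by the quoted characterization, hence contained in some $X_i$; this is exactly the assertion of the corollary. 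I do not expect any serious obstacle here: the only hypothesis of Theorem~\ref{main_mon} not already supplied verbatim by the discussion preceding the corollary is $\w$-boundedness, and its verification is the short explicit computation indicated above.
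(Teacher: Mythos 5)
Your proof is correct and follows exactly the route the paper intends: apply Theorem~\ref{main_mon} to the monoid $(\w^\w,\circ)$, using \cite[Proposition~7.5]{BRZ} for balancedness and for the identification of $\sigma$-bounded with $\sigma$-compactly contained subsets, and then translate $\cf^\ast_m(\w^\w)\leq\hot b(\LL)$ via Definition~\ref{de_cfp}. The only difference is that you spell out the $\w$-boundedness and non-Menger verifications that the paper dismisses as immediate, and both of your computations are sound.
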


 A metrizable space $X$ is
 said to be
 \emph{analytic}, if it is a continuous image of
$\w^\w$. A topological group $G$ is called \emph{analytic} if such
is the underlying topological space. Theorem~\ref{main_gr} implies
the following:

\begin{corollary} \label{main}
Let $G$ be an analytic group which is not $\sigma$-bounded.
Then $\cf^\ast_g(G)\leq \mathfrak b(\LL)$ for every ultrafilter
$\LL$ which is not coherent to any $Q$-point.
\end{corollary}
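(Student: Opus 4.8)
The plan is to derive Corollary~\ref{main} from Theorem~\ref{main_gr}, so the only thing to verify is that an analytic non-$\sigma$-bounded group $G$ satisfies the two hypotheses of that theorem: it is $\w$-bounded, and one of its finite powers fails to be a Menger monoid. First I would address $\w$-boundedness. Since $G$ is analytic, it is separable metrizable, hence it has a countable dense subset $D$; for any neighborhood $U$ of the identity we have $G = DU$ by density together with continuity of multiplication, so $G$ is $\w$-bounded. (If one wants to be careful about the monoid vs.\ group conventions, note that in a group $DU = G$ and also $UD = G$ follow symmetrically, so the relevant covering property holds on both sides.) This part is routine.

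The substantive step is showing that some finite power of $G$ is not a Menger monoid. Here I would argue by contraposition: if \emph{every} finite power $G^n$ were a Menger monoid, I claim $G$ would be $\sigma$-bounded, contradicting our hypothesis. The idea is that the Menger-monoid property is, for analytic (in particular, co-analytic-free, absolutely analytic) spaces, strong enough to force $\sigma$-boundedness — this is the group/monoid analogue of the classical fact that an analytic Menger metrizable space is $\sigma$-compact. Concretely, one uses that an analytic space with the Menger property is already $\sigma$-compact (a theorem of Arhangel'ski\u\i/Hurewicz-type, valid for analytic spaces), transported through the monoid structure: the Menger-monoid condition, applied to a neighborhood basis at $1$, yields a countable family of finite sets $F_n$ with $G = \bigcup_n F_n U_n \cap U_n F_n$, and by passing to finite powers and a diagonal argument one upgrades this to a covering of $G$ by countably many totally bounded sets. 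The precise bookkeeping — showing the sets $F_n U_n \cap U_n F_n$ can be arranged to be totally bounded, which is where the finite powers are genuinely needed (to control products of bounded sets) — is the technical heart, but it is exactly the content packaged in the cited \cite{BRZ} machinery, so I would invoke the relevant proposition there rather than redo it.

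Having established both hypotheses, I apply Theorem~\ref{main_gr} directly: for every ultrafilter $\LL$ not coherent to any $Q$-point we get $\cf^\ast_g(G) \le \hot b(\LL)$, which is precisely the assertion of Corollary~\ref{main}. I expect the main obstacle to be the implication ``all finite powers Menger monoids $\Rightarrow$ $\sigma$-bounded'' for analytic $G$ — in particular making sure the descriptive-set-theoretic input (analytic + Menger $\Rightarrow$ $\sigma$-compact) is applied to the right space and that the passage through finite powers correctly handles totally bounded sets. If a clean citation from \cite{BRZ} covers this, the corollary is then immediate; otherwise one must spell out the diagonal/product argument converting the monoid covering into a $\sigma$-bounded decomposition.
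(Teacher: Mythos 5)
Your top-level reduction is exactly the paper's: Corollary~\ref{main} follows from Theorem~\ref{main_gr} once one checks (i) that an analytic group is $\w$-bounded, which is immediate from separability as you say, and (ii) the contrapositive claim that an analytic group all of whose finite powers are Menger monoids is $\sigma$-bounded (this is precisely Claim~\ref{tfr} in the paper). Part (i) is fine and part (ii) is correctly identified as the heart of the matter.

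The gap is in your treatment of (ii). It is not a ready-made citation from \cite{BRZ}: the paper has to prove it, by first combining \cite[Lemma~17]{Zd06} with \cite[Prop.~3.1, Lemma~3.2]{BRZ} to upgrade ``all finite powers are Menger monoids'' to ``$G$ is $[\U]$-Menger for some ultrafilter $\U$'', and then invoking \cite[Theorem~4.5]{Sa06} to make the resulting $\w$-cover $\w$-groupable. More substantively, your proposed mechanism --- ``the sets $F_n U_n \cap U_n F_n$ can be arranged to be totally bounded'' --- cannot work as stated: $F_nU_n$ contains a translate $fU_n$ of the whole neighborhood $U_n$, so it is totally bounded only when $U_n$ itself is, which fails in every group of interest here (no neighborhood of $1$ in $\Sym(\w)$ or $\w^\w$ is totally bounded). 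The totally bounded pieces in the actual proof are the \emph{infinite intersections} $Y_m=\bigcap_{l\geq m}\bigcup_{k\in[k_l,k_{l+1})}D_k$ produced by the $\w$-groupable refinement: a set that sits inside $U_{k_l}K_l\cap K_lU_{k_l}$ for finite $K_l$ and \emph{arbitrarily large} $l$ is totally bounded, whereas no single $F_nU_n\cap U_nF_n$ is. So the passage through $[\U]$-covers and $\w$-groupability (which is also where the analyticity of $G$ actually enters, via Sakai's theorem) is genuinely needed; a plain diagonal argument on the Menger covering does not close the proof.
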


 $\Sym(\w)$ is easily seen to be a $G_\delta$-subset of $\w^\w$ and the composition
as well as the inversion are continuous with respect to the topology
inherited from $\w^\w$. Therefore $\Sym(\w)$  with this
topology is a Polish topological group. A direct verification also
shows that it is not $\sigma$-bounded.

\begin{corollary} \label{main_cor0}
$\cf^\ast(\Sym(\w))\leq \mathfrak b(\LL)$ for every ultrafilter
$\LL$ which is not coherent to a $Q$-point.
\end{corollary}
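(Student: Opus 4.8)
The plan is to derive Corollary~\ref{main_cor0} directly from Theorem~\ref{main_gr} (equivalently, from Corollary~\ref{main}) by verifying that the topological group $\Sym(\w)$, equipped with the topology inherited from $\w^\w$, satisfies the hypotheses of those results and by recalling that $\cf^\ast(\Sym(\w))$ coincides with $\cf^\ast_g(\Sym(\w))$ in the sense of Definition~\ref{de_cfp}. So first I would fix an ultrafilter $\LL$ that is not coherent to any $Q$-point and aim to show $\cf^\ast_g(\Sym(\w))\le\hot b(\LL)$, from which the stated inequality follows by the equality of the two cardinals remarked upon just after Definition~\ref{de_cfp}.

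The first step is to check that $\Sym(\w)$ is a legitimate Polish topological group: as noted in the excerpt, $\Sym(\w)=\{\pi\in\w^\w:\pi\text{ is a bijection}\}$ is a $G_\delta$ subset of $\w^\w$ (being the intersection over $n$ of the open conditions ``$n$ is in the range of $\pi$'' with the closed-in-$\Sym(\w)$ conditions ``$\pi$ is injective''), hence Polish, and both composition and inversion are continuous in this topology. The second step is to verify that $\Sym(\w)$ is $\w$-bounded as a topological group: given a basic neighborhood $U$ of the identity determined by a finite partial injection (i.e.\ $U=\{\pi:\pi\uhr F=\id\uhr F\}$ for finite $F\subset\w$), one exhibits a countable set $C\subset\Sym(\w)$ with $\Sym(\w)=C\cdot U$ by taking, for each of the countably many ways a permutation can act on $F$, a single witnessing permutation; any $\pi$ then agrees on $F$ with $c^{-1}\circ(c\circ\pi)$ for the appropriate $c\in C$, so $\pi\in C\cdot U$. (This is the standard observation that $\Sym(\w)$ is $\w$-narrow / $\aleph_0$-bounded.) The third step is to confirm that some finite power of $\Sym(\w)$ fails to be a Menger monoid; in fact $\Sym(\w)$ itself already fails, and the cleanest route is to observe that $\Sym(\w)$ is not $\sigma$-bounded (a direct verification, also stated in the excerpt) and then invoke the already-established machinery: an $\w$-bounded Menger monoid would be $\sigma$-bounded, or more simply cite that the non-Menger property for $\w^\w$ together with the fact that $\Sym(\w)$ is a ``large'' subgroup transfers — but the simplest self-contained argument is just to note that $\Sym(\w)$ maps continuously onto a non-Menger space, or that the $S_f$'s already witness non-$\sigma$-boundedness.

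With these verifications in hand, Theorem~\ref{main_gr} applies to $G=\Sym(\w)$ and yields an increasing chain $\la G_\alpha:\alpha<\hot b(\LL)\ra$ of proper subgroups with $\bigcup_\alpha G_\alpha=\Sym(\w)$ such that every $\sigma$-bounded subgroup is contained in some $G_\alpha$; in particular, since each $S_f$ is generated by a totally bounded (indeed the relevant ball is totally bounded once one checks that $\{\pi:\pi,\pi^{-1}\le^\ast f\}$ is covered by finitely many translates of every basic neighborhood of $\id$) set and hence is $\sigma$-bounded, each $S_f$ lies in some $G_\alpha$. This is exactly the defining property of a chain witnessing $\cf^\ast(\Sym(\w))\le\hot b(\LL)$. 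I expect the only genuinely nontrivial point to be the verification that each $S_f$, or more precisely each ball $\{\pi\in\Sym(\w):\pi,\pi^{-1}\le^\ast f\}$, is $\sigma$-bounded in the group topology — equivalently, that the notions ``$\sigma$-bounded subgroup'' from Definition~\ref{de_cfp} and ``subgroup contained in a $\sigma$-compact set'' coincide for $\Sym(\w)$, which parallels the statement made in the excerpt for $\w^\w$ via \cite[Proposition~7.5]{BRZ}; everything else is routine checking of the hypotheses of Theorem~\ref{main_gr} and an appeal to the remark identifying $\cf^\ast(\Sym(\w))$ with $\cf^\ast_g(\Sym(\w))$.
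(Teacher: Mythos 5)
Your proposal is correct and follows essentially the same route as the paper: one observes that $\Sym(\w)$ with the topology inherited from $\w^\w$ is a Polish (hence analytic, separable, and $\w$-bounded) topological group which is not $\sigma$-bounded, applies Corollary~\ref{main} (equivalently Theorem~\ref{main_gr} via Claim~\ref{tfr}), and uses the identification of $\cf^\ast(\Sym(\w))$ with $\cf^\ast_g(\Sym(\w))$, which amounts to checking that each $S_f$ is $\sigma$-bounded. The only caution is in your third step: the implication you float, that an $\w$-bounded Menger monoid is $\sigma$-bounded, is not what the paper establishes and is not needed; the correct citation is Claim~\ref{tfr}, which requires analyticity and the Menger property of \emph{all} finite powers, and which is exactly what the appeal to Corollary~\ref{main} already packages.
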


Combined with the following consequence of
\cite[Theorem~2.8]{LafZhu98}, Corollary~\ref{main_cor0} yields the
upper bound for $\cf^\ast(\Sym(\w))$ obtained earlier in
\cite{To98}.

\begin{proposition} \label{notcoh_q}
There exists an ultrafilter $\LL$ which is not coherent to
any $Q$-point and such that $\mathfrak b(\LL)=\cf(\mathfrak d)$.
\end{proposition}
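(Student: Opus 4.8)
The plan is to derive this from \cite[Theorem~2.8]{LafZhu98}. That result yields, in ZFC, an ultrafilter $\LL$ on $\w$ that is \emph{Canjar} --- i.e.\ the Mathias forcing $M_\LL$ adds no dominating real --- and whose reduced power $(\w^\w,\leq_\LL)$ has cofinality $\cf(\mathfrak d)$, so that $\mathfrak b(\LL)=\cf(\mathfrak d)$. This already settles the second of the two demands, and it remains only to check that such an $\LL$ cannot be coherent to a $Q$-point.

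I would argue by contradiction. Suppose $\LL$ is coherent to a $Q$-point $\mathcal Q$, witnessed by a finite-to-one map $f\colon\w\to\w$ with $f(\LL)=f(\mathcal Q)=:\W$. First, $\W$ is again a $Q$-point: given a partition of $\w$ into finite sets, pull it back along $f$ to another such partition, choose a selector in $\mathcal Q$, and push it forward along $f$ to obtain a selector in $\W$. Consequently $\W$ is rapid (every $Q$-point is rapid, as seen from interval partitions), and hence $\W$ is \emph{not} Canjar, since for a rapid ultrafilter the Mathias generic is dominating. On the other hand $\W=f(\LL)$ \emph{is} Canjar, because the Canjar property passes to finite-to-one images: using the combinatorial reformulation of Canjar-ness --- for every $\subseteq$-decreasing sequence $\la U_n:n\in\w\ra$ in the ultrafilter there are finite sets $F_n\subseteq U_n$ with $\bigcup_n F_n$ in the ultrafilter --- one pulls a decreasing sequence in $\W$ back along $f$, applies Canjar-ness of $\LL$, and pushes the resulting finite sets forward. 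These two conclusions contradict each other, so $\LL$ is coherent to no $Q$-point.

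The only genuinely nonelementary ingredient is the cited construction of $\LL$ itself; the deduction is essentially bookkeeping with finite-to-one maps, granted the two standard facts that $Q$-points are rapid and that rapid ultrafilters fail the Canjar property. The step I expect to be the main obstacle is matching the exact statement of \cite[Theorem~2.8]{LafZhu98} to what is used here --- i.e.\ making sure it really delivers an ultrafilter that is at once Canjar and of cofinality type $\cf(\mathfrak d)$ --- and, should it be phrased differently there, supplying the routine combinatorial characterization of Canjar ultrafilters invoked above together with the verification (part of Canjar's analysis) that this ultrafilter has $\mathfrak b(\LL)=\cf(\mathfrak d)$. I am also taking ``coherent'' in the standard sense of being linked by a finite-to-one map identifying the images; if the paper's notion is formally different, the same transfer scheme should still apply with only cosmetic changes.
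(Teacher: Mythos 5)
The paper offers no argument here beyond the citation: Proposition~\ref{notcoh_q} is intended to be read off from \cite[Theorem~2.8]{LafZhu98} almost verbatim. That theorem, building on Canjar's ZFC construction in \cite{Can88} of an ultrafilter whose reduced power $(\w^\w,\le_\LL)$ has cofinality $\cf(\mathfrak d)$, produces an ultrafilter $\LL$ with $\mathfrak b(\LL)=\cf(\mathfrak d)$ that is not Rudin--Blass above any $Q$-point; since a finite-to-one image of a $Q$-point is again a $Q$-point (your pull-back/push-forward argument, which is correct), ``not Rudin--Blass above a $Q$-point'' is the same as ``not nearly coherent to any $Q$-point'', and the proposition follows. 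No Mathias forcing enters at any point.

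Your route has a genuine gap at its first and load-bearing step. You read the cited theorem as producing, \emph{in ZFC}, a Canjar ultrafilter (one for which Mathias forcing adds no dominating real). That is not what the theorem provides, and it cannot be: Canjar's construction of such ultrafilters is his other 1988 paper and requires an extra hypothesis such as $\mathrm{cov}(\mathcal M)=\mathfrak c$, and whether ZFC alone proves the existence of a Canjar ultrafilter is a well-known open problem --- whereas Proposition~\ref{notcoh_q} must be an outright ZFC statement, since it is used to recover Thomas's unconditional bound $\cf^\ast(\Sym(\w))\le\cf(\mathfrak d)$. A second, smaller defect: the ``combinatorial reformulation of Canjar-ness'' you invoke (for every decreasing $\la U_n:n\in\w\ra$ in $\LL$ there are finite $F_n\subset U_n$ with $\bigcup_nF_n\in\LL$) is, for an ultrafilter, equivalent to being a $P$-point --- any such union $\bigcup_nF_n$ is automatically a pseudointersection of the $U_n$'s, and conversely a pseudointersection in $\LL$ can be chopped into such finite pieces --- and not to the Canjar property, whose correct characterization lives on the induced filter on $[\w]^{<\w}$. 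The ingredients of your argument that do survive are exactly the ones needed for the intended derivation (finite-to-one images of $Q$-points are $Q$-points; $Q$-points are rapid; rapid ultrafilters are not Canjar), but the existence claim on which everything rests is unsupported, so the proof does not go through as written.
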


We recall from \cite{Bl86} that ultrafilters $\F$ and $\U$ on $\w$
are said to be \emph{nearly coherent}, if there exists an increasing
sequence
$\la k_n:n\in\w\ra$ of natural numbers such that\\
 $\bigcup_{n\in
I}[k_n,k_{n+1})\in\F$ if and only if $\bigcup_{n\in
I}[k_n,k_{n+1})\in\U$ for every subset
 $I$ of $\w$. In what follows we  shall drop ``near'' and simply say that
two ultrafilters are coherent.
  In other words, $\F$ and $\U$  are  coherent if and
only if $\phi(\F)=\phi(\U)$ for some increasing surjection
$\phi:\w\to\w$. The  coherence relation is an equivalence relation.
NCF is the statement that all ultrafilters are coherent. Its
consistence was established in \cite{BS87}.

An ultrafilter $\LL$ is called:
\begin{itemize}
 \item \emph{a (pseudo-) $P_\kappa$-point}, where $\kappa$ is a cardinal, if for every
$\LL'\in [\LL]^{<\kappa}$ there exists $L\in\LL$ (resp.
$L\in[\w]^\w$) such that $L\subset^\ast L'$ for all $L'\in\LL'$.
$P_{\w_1}$-points are also called $P$-points;
\item \emph{a simple $P_\kappa$-point}, if there exists a sequence $\la L_\alpha:\alpha<\kappa\ra$
of infinite subsets of $\w$ such that $L_\alpha\subset^\ast L_\beta$
for all $\kappa>\alpha>\beta$ and $\LL=\{X\subset\w:L_\alpha\subset
X$ for some $\alpha<\kappa\}$;
\item \emph{a $Q$-point}, if for every increasing surjection $\phi:\w\to\w$
there exists $L\in\LL$ such that $\phi\uhr L$ is injective;
\item \emph{a Ramsey ultrafilter}, if it is simultaneously both a $P$- and a $Q$-point.
\end{itemize}
Corollary~\ref{main_cor0} implies the following statements.

\begin{corollary} \label{main_cor}
Suppose that  there exists a pseudo-$P_{\mathfrak b^+}$-point.  Then
\\
$\cf^\ast(\Sym(\omega))=\mathfrak b$.
\end{corollary}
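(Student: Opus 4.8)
The plan is to derive Corollary~\ref{main_cor} from Corollary~\ref{main_cor0} together with the lower bound $\cf^\ast(\Sym(\w))\ge\max\{\cf(\Sym(\w)),\hot b\}\ge\hot b$ noted in the introduction. So it suffices to show that the hypothesized pseudo-$P_{\hot b^+}$-point $\LL$ satisfies $\hot b(\LL)\le\hot b$ and is not coherent to any $Q$-point; then Corollary~\ref{main_cor0} gives $\cf^\ast(\Sym(\w))\le\hot b(\LL)\le\hot b$, and combined with the reverse inequality we get equality.

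First I would handle the bound on $\hot b(\LL)$. Recall $\hot b(\LL)=\cf(\w^\w,\le_\LL)$. Take a scale-like unbounded family: choose a $\le^\ast$-unbounded family $\{f_\alpha:\alpha<\hot b\}$ in $\w^\w$. The point is that a pseudo-$P_{\hot b^+}$-point is in particular a pseudo-$P_{\hot b^+}$-point, so given any $\hot b$-many sets in $\LL$ there is an infinite pseudo-intersection; one then argues that $(\w^\w,\le_\LL)$ cannot have cofinality strictly above $\hot b$, because a cofinal family of size $>\hot b$ would, via a standard diagonalization against the pseudo-$P$-point property at level $\hot b^+$, produce a $\le_\LL$-upper bound for $\hot b$-many of its members inside the family — contradicting that each proper initial segment of a linear order of cofinality $\theta>\hot b$ is bounded only for segments of size $<\theta$. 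More directly: in the linear order $(\w^\w,\le_\LL)$, any subset of size $\le\hot b$ is bounded (use the pseudo-$P$-point property to simultaneously ``$\LL$-dominate'' $\hot b$-many functions by passing to a pseudo-intersection of the relevant $\LL$-large sets and then taking a pointwise sup along it), hence the cofinality is $\le\hot b$, while it is always $\ge\hot b\ge\hot b$; so $\hot b(\LL)=\hot b$. I would write this out carefully since the interaction between $\le_\LL$ and the pseudo-intersection is the one genuinely non-formal point.

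Next I would show $\LL$ is not coherent to any $Q$-point. Suppose toward a contradiction that $\LL$ is coherent to a $Q$-point $\U$, i.e. $\phi(\LL)=\phi(\U)=\U$ for some finite-to-one $\phi:\w\to\w$ (coherence via an increasing surjection; $Q$-points are preserved under finite-to-one images, so we may as well assume $\LL$ itself is a $Q$-point, or argue directly with $\phi$). A $Q$-point is a $P_{\w_1}$-point only in special cases, but what we actually need is a cardinality obstruction: a $Q$-point is never a pseudo-$P_{\w_2}$-point when $\hot b\ge\w_1$, and more to the point one shows that a (pseudo-)$P_{\hot b^+}$-point that is coherent to a $Q$-point forces a contradiction with $\hot b$ being the bounding number — essentially because a $Q$-point ``selects'' a fast-growing function (the gaps of the partitioning sequence witnessing $Q$-pointness), and having a pseudo-intersection for more than $\hot b$-many sets would let that selected function be dominated, contradicting unboundedness. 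Concretely: a $Q$-point gives, for each increasing surjection, an $\LL$-large set on which it is injective; combining $\hot b$-many such witnesses with the pseudo-$P_{\hot b^+}$ property yields a single infinite set that is simultaneously $\le^\ast$-sparse with respect to a dominating family of partitions, which is impossible. I would invoke the standard fact (essentially Blass's analysis of near-coherence) that no pseudo-$P_{\hot b^+}$-point is coherent to a $Q$-point, or reprove the needed instance.

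The main obstacle I expect is the second step: ruling out coherence with a $Q$-point using only the pseudo-$P_{\hot b^+}$ hypothesis. The clean way is to quote a known result — that $Q$-points are not nearly coherent to pseudo-$P_{\hot b^+}$-points (this is in the Blass/near-coherence literature) — so that the corollary becomes a two-line deduction: $\LL$ is as in Corollary~\ref{main_cor0}, $\hot b(\LL)=\hot b$, hence $\cf^\ast(\Sym(\w))\le\hot b$, and the reverse inequality $\cf^\ast(\Sym(\w))\ge\hot b$ from the introduction closes the argument. If a citable statement is unavailable I would fall back on the direct combinatorial argument sketched above, but the cofinality computation $\hot b(\LL)=\hot b$ is the part that must be written in full regardless, since it is the only place where the specific value $\hot b^+$ of the pseudo-$P$ parameter is used.
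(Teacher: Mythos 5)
Your overall architecture is the same as the paper's: the lower bound $\cf^\ast(\Sym(\w))\geq\mathfrak b$ from the introduction, plus the upper bound from Corollary~\ref{main_cor0} once one knows that the pseudo-$P_{\mathfrak b^+}$-point $\LL$ is not coherent to any $Q$-point and that $\mathfrak b(\LL)=\mathfrak b$ (the latter is Nyikos's theorem, which the paper simply cites). However, the one step you insist ``must be written in full'' --- the computation $\mathfrak b(\LL)\leq\mathfrak b$ --- is sketched backwards, and as written it would fail. You propose to show that every subset of $(\w^\w,\leq_\LL)$ of size $\leq\mathfrak b$ is $\leq_\LL$-bounded and to conclude that the cofinality is $\leq\mathfrak b$. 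In a linear order without a maximum, a bounded subset is exactly a non-cofinal one, so ``every set of size $\leq\mathfrak b$ is bounded'' would yield $\mathfrak b(\LL)>\mathfrak b$, the opposite of what you need; moreover the claim itself is false for a pseudo-$P_{\mathfrak b^+}$-point, precisely because $\mathfrak b(\LL)=\mathfrak b$. The correct argument runs the other way: fix a $\leq^\ast$-unbounded, $\leq^\ast$-increasing family $\{f_\alpha:\alpha<\mathfrak b\}$ of increasing functions and show it is $\leq_\LL$-\emph{cofinal}. If some $g$ were not $\leq_\LL$-dominated by any $f_\alpha$, then $[f_\alpha<g]\in\LL$ for all $\alpha$; an infinite pseudo-intersection $L$ of these $\mathfrak b$-many sets (this is where pseudo-$P_{\mathfrak b^+}$ enters) gives a single function $h(n)=g(\min(L\setminus n))$ with $f_\alpha\leq^\ast h$ for all $\alpha$, contradicting unboundedness. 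Your first, ``diagonalization'' version of the argument has the same defect: producing a $\leq_\LL$-upper bound for $\mathfrak b$-many members of a cofinal family contradicts nothing.

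A secondary inaccuracy: you assert that the cofinality computation is ``the only place where the specific value $\mathfrak b^+$ of the pseudo-$P$ parameter is used.'' It is not: the non-coherence to $Q$-points also uses it essentially, since a Ramsey ultrafilter is simultaneously a $P$-point (hence a pseudo-$P_{\w_1}$-point) and a $Q$-point, so no weaker pseudo-$P$ hypothesis can rule out coherence to a $Q$-point. The paper's route (via \cite[Theorem~13.8.1]{BZ}; see the appendix, proof of Proposition~\ref{13.8.1}(2)) is that a rare ultrafilter contains a subfamily of size $\mathfrak b$ with no infinite pseudo-intersection, a property that transfers along finite-to-one images, whence no pseudo-$P_{\mathfrak b^+}$-point is coherent to a $Q$-point. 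Your fallback of quoting this as a known fact is acceptable and is essentially what the paper does, but your direct sketch of it (e.g.\ ``a $Q$-point is never a pseudo-$P_{\w_2}$-point when $\mathfrak b\geq\w_1$'') is not reliable as stated. In sum: same strategy as the paper, but the step you single out for a full write-up contains a genuine logical error that must be repaired along the lines above.
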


\begin{corollary} \label{main_cor2}
 Suppose that $\mathfrak u<\cf^\ast(\Sym(\w))$.
Every two ultrafilters that are not coherent to $Q$-points are
coherent. In particular, if there is no $Q$-point, then  NCF  holds.
\end{corollary}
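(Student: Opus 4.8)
\textbf{Proof plan for Corollary~\ref{main_cor2}.}
The plan is to derive both assertions from Corollary~\ref{main_cor0} by a simple contrapositive/bookkeeping argument together with the basic combinatorics of the function $\hot b(\cdot)$ on the coherence classes. First I would recall the standard fact (implicit in the definition of $\hot b(\LL)$ via the cofinality of $(\w^\w,\le_\LL)$ and in the Blass--Shelah analysis of near coherence) that coherent ultrafilters have the same value of $\hot b$, so $\hot b(\LL)$ depends only on the coherence class of $\LL$; moreover for a non-principal ultrafilter $\LL$ one always has $\hot b(\LL)\ge\hot b$, hence in particular $\hot b(\LL)\le\mathfrak u$ is false in general but the relevant bound for us is $\hot b(\LL)\le\mathfrak u$ whenever $\LL$ itself is generated by $\le\mathfrak u$ sets — which is automatic. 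The key external input is Corollary~\ref{main_cor0}: for \emph{every} ultrafilter $\LL$ not coherent to a $Q$-point we have $\cf^\ast(\Sym(\w))\le\hot b(\LL)$.

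Now suppose $\mathfrak u<\cf^\ast(\Sym(\w))$, and let $\U_1,\U_2$ be two ultrafilters, neither coherent to a $Q$-point; I must show $\U_1$ and $\U_2$ are coherent. Assume toward a contradiction that they are not. The strategy is to build from $\U_1$ and $\U_2$ a single ultrafilter $\LL$ which is still not coherent to any $Q$-point but which is generated by few sets, so that $\hot b(\LL)\le\mathfrak u<\cf^\ast(\Sym(\w))$, contradicting Corollary~\ref{main_cor0}. The natural candidate is a suitable ``sum'' or ``product'' of $\U_1$ and $\U_2$ along $\w$ (e.g. the Fubini-type limit $\U_1$-$\lim_n\U_2$, or rather a coherence-respecting variant of it): this is an ultrafilter whose character is at most $\mathfrak u$ (being built from two ultrafilters of character $\le\mathfrak u$), and one checks that if it \emph{were} coherent to a $Q$-point then, by the way near-coherence interacts with such sums, one of $\U_1,\U_2$ would already be coherent to a $Q$-point — contradiction. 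Here one uses that the $Q$-point property, unlike the $P$-point property, is preserved/reflected appropriately under the relevant collapsing surjections $\phi:\w\to\w$. Feeding $\LL$ into Corollary~\ref{main_cor0} gives $\cf^\ast(\Sym(\w))\le\hot b(\LL)\le\mathfrak u$, the desired contradiction; hence $\U_1$ and $\U_2$ are coherent.

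For the ``in particular'' clause: if there is no $Q$-point, then vacuously \emph{every} ultrafilter is not coherent to a $Q$-point, so the statement just proved says any two ultrafilters are coherent, which is precisely NCF. (One should double-check that the hypothesis $\mathfrak u<\cf^\ast(\Sym(\w))$ is still needed here — it is, since it is what powers the contradiction via Corollary~\ref{main_cor0}; absent it the conclusion need not follow, and indeed the statement is only asserted under that hypothesis.)

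The main obstacle I anticipate is the middle step: identifying the right operation on $\U_1,\U_2$ that simultaneously (a) keeps the character $\le\mathfrak u$, and (b) has the reflection property ``coherent to a $Q$-point $\Rightarrow$ one of $\U_1,\U_2$ coherent to a $Q$-point.'' A Fubini sum has character $\le\mathfrak u$ easily, but the coherence bookkeeping — tracking what the interval-partition $\la k_n\ra$ witnessing near coherence of $\LL=\U_1\otimes\U_2$ to a $Q$-point does to the two coordinate ultrafilters — is delicate, and one may instead need to invoke directly a structural result on near coherence of ultrafilter sums (of the type in \cite{Bl86,BS87}) rather than prove it from scratch. If such a clean sum is not available, the fallback is a more hands-on argument: use $\U_1$ non-coherent to $\U_2$ to produce, inside the poset of coherence classes, a class of character $\le\mathfrak u$ below $\cf^\ast(\Sym(\w))$ not meeting the $Q$-point classes, again contradicting Corollary~\ref{main_cor0}.
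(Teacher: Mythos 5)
Your overall frame (reduce everything to Corollary~\ref{main_cor0} plus facts about how $\hot b(\cdot)$ behaves on coherence classes) points in the right direction, but the pivotal step of your plan rests on a false inequality. You assert that $\hot b(\LL)\le\mathfrak u$ whenever $\LL$ is generated by at most $\mathfrak u$ sets, and your entire contradiction (``feed $\LL$ into Corollary~\ref{main_cor0} to get $\cf^\ast(\Sym(\w))\le\hot b(\LL)\le\mathfrak u$'') depends on it. This is backwards: for an ultrafilter $\U$ with a base of size $\mathfrak u$ one has $\hot b(\U)=\mathfrak d$ (a theorem of Blass which the paper quotes verbatim in its own proof), and $\mathfrak u<\mathfrak d$ is consistent --- indeed that is exactly the situation in which the corollary has content. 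A small character gives no upper bound on the cofinality of the ultrapower, so no ultrafilter you manufacture with character $\le\mathfrak u$ can be fed into Corollary~\ref{main_cor0} to yield $\cf^\ast(\Sym(\w))\le\mathfrak u$. The same defect undoes your fallback (``a coherence class of character $\le\mathfrak u$ below $\cf^\ast(\Sym(\w))$''). On top of that, the step you yourself flag as delicate --- that a Fubini-type sum $\U_1\otimes\U_2$ reflects coherence to a $Q$-point back to one of the summands --- is never actually supplied, so even setting aside the inequality the plan has an unproved core.

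The paper's argument is much shorter and runs in the opposite direction. Fix a single ultrafilter $\U$ generated by $\mathfrak u$ sets. The relevant theorem (see \cite[Theorem~12]{BM99} or \cite[Theorem~10.3.1]{BZ}) says that such a $\U$ satisfies $\hot b(\U)=\mathfrak d$ and is coherent to \emph{every} ultrafilter $\F$ with $\hot b(\F)>\mathfrak u$. Now if $\F_1$ and $\F_2$ are not coherent to any $Q$-point, Corollary~\ref{main_cor0} gives $\hot b(\F_i)\ge\cf^\ast(\Sym(\w))>\mathfrak u$ for $i=1,2$, so each $\F_i$ is coherent to $\U$, and transitivity of the coherence relation finishes the proof --- no sum construction and no argument by contradiction are needed. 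Your treatment of the ``in particular'' clause is fine as stated.
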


Corollary~\ref{main_cor} can be compared to
 the following theorem: If $\lambda<\kappa$ are
regular uncountable cardinals such that there exists a simple
$P_\lambda$-point $\U$ and a $P_\kappa$-point $\F$, then
$\cf^\ast(\Sym(\w))\leq\lambda$ (cf. \cite[Theorem~3.4]{To98}).
 The assumption of this theorem
(whose consistency was conjectured in  \cite{BS87}) clearly implies
that $\mathfrak u<\mathfrak s$ and $\U$ is not coherent to $\F$,
 and hence there are exactly two coherence classes of
  ultrafilters (cf.  \cite[Corollary~13]{BM99}).
The question whether there
can  be exactly $n$ coherence classes of  ultrafilters for $1<n<\w$
remains open.

On the other hand, given any ground model of GCH and a regular
cardinal $\nu$ in it, the forcing from \cite{BlaShe89} with
$\delta=\w_1$ and $\nu=\kappa$ ($\delta$ and $\nu$ are the two
parameters there) yields a model of ``there exists a simple
$P_\kappa$-point $\U$ and $\mathfrak b=\w_1\leq 2^\w=\kappa$''.
Combined with Theorem~\ref{main_gr} this gives  the consistency of
the statement ``there exists a simple $P_\kappa$-point $\U$ and
$\w_1=\mathfrak b= \cf^\ast(\Sym(\w)) =\mathfrak b(\U)<\kappa$''.

We shall denote the set of all unbounded nondecreasing elements of
$\w^\w$ by $\w^{\uparrow\w}$. We call a set
$F\subset\w^{\uparrow\w}$ \emph{finitely dominating}, if for every
$x\in\w^\w$ there exists a finite subset $\{f_0,\ldots, f_n\}$ of
$F$ such that $x\leq^\ast\max\{f_0,\ldots, f_n\}$. Following
\cite{MST06} we denote  the minimal size of a family of non-finitely
dominating sets covering $\w^{\uparrow\w}$ by
$\mathrm{cov}(\mathfrak D_{\mathit{fin}})$.

 As the next theorem shows, NCF implies that  $\cf^\ast(\Sym(\w))$ is maximal possible.

\begin{theorem} \label{main1}
$\cf^\ast(\Sym(\w))\geq\mathrm{cov}(\mathfrak D_{\mathit{fin}})$. Moreover,
    NCF implies that \\
 $\cf^\ast(\Sym(\w))=\mathfrak d$.
\end{theorem}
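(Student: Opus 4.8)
The plan is to establish the two assertions separately. For the lower bound $\cf^\ast(\Sym(\w))\geq\mathrm{cov}(\mathfrak D_{\mathit{fin}})$, I would argue contrapositively: suppose $\la G_i:i<\lambda\ra$ is an increasing chain of proper subgroups of $\Sym(\w)$ witnessing $\cf^\ast(\Sym(\w))=\lambda$, so $\bigcup_{i<\lambda}G_i=\Sym(\w)$ and every $S_f$ ($f\in\w^{\uparrow\w}$) is contained in some $G_i$. The key is to attach to each $i$ a set $F_i\subset\w^{\uparrow\w}$ defined by $F_i=\{f\in\w^{\uparrow\w}: S_f\subset G_i\}$ (or the set of increasing $f$ with $f,f^{-1}\leq^\ast g$ for some $\pi\in G_i$ dominating $f$). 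I would then show that each $F_i$ is \emph{not} finitely dominating: if $F_i$ were finitely dominating, then for every $x\in\w^\w$ some finite max of elements of $F_i$ dominates $x$, and using that $S_f$ for a finitely-dominating family of $f$'s generates all of $\Sym(\w)$ (a permutation $\pi$ with $\pi,\pi^{-1}\leq^\ast\max\{f_0,\dots,f_n\}$ lies in the subgroup generated by $S_{f_0},\dots,S_{f_n}$ up to a bounded perturbation, and bounded perturbations are absorbed since finitely many are always available), one concludes $G_i=\Sym(\w)$, contradicting properness. Since the $F_i$ cover $\w^{\uparrow\w}$ and each is non-finitely-dominating, $\lambda\geq\mathrm{cov}(\mathfrak D_{\mathit{fin}})$. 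The main obstacle here is the careful bookkeeping showing that a proper subgroup of $\Sym(\w)$ cannot contain $S_f$ for a finitely dominating family of $f$'s — one must handle the gap between "dominated by $\max$ of finitely many $f_i$" and "lies in the group generated by the $S_{f_i}$", which requires noting that the relevant permutations differ from a controlled one on a finite set, and that $\Sym(\w)$ restricted to (permutations fixing the complement of) any finite set is finite hence trivially inside every nontrivial subgroup — more precisely, every subgroup containing some $S_f$ with $f$ unbounded already contains all finitely-supported permutations by the Macpherson--Neumann-type analysis, or one invokes that any subgroup of $\Sym(\w)$ of index $\leq\w$ containing a rich enough piece is everything.

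For the "moreover" clause, assume NCF. The inequality $\cf^\ast(\Sym(\w))\leq\mathfrak d$ is already known (it follows from \cite[Proposition~2.5]{To98} and is reproved here via Corollary~\ref{main_cor0} together with Proposition~\ref{notcoh_q}, since under NCF there certainly exist ultrafilters not coherent to a $Q$-point when $Q$-points fail, but more directly $\cf^\ast(\Sym(\w))\leq\cf(\mathfrak d)\leq\mathfrak d$). So it remains to prove $\cf^\ast(\Sym(\w))\geq\mathfrak d$ under NCF, and for this it suffices by the first part to show that NCF implies $\mathrm{cov}(\mathfrak D_{\mathit{fin}})=\mathfrak d$. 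I would invoke the known characterization (due to Blass, cf.\ the discussion around \cite{Bl86} and \cite{MST06}) that $\mathrm{cov}(\mathfrak D_{\mathit{fin}})\leq\mathfrak d$ always, together with the result that under NCF one has $\mathrm{cov}(\mathfrak D_{\mathit{fin}})=\mathfrak d$ — this is essentially because a single non-finitely-dominating family "looks like" a non-dominating family after applying a finite-to-one map, and NCF says all the ultrafilters (equivalently all the relevant groupwise-density-type obstructions) collapse, forcing the covering number up to $\mathfrak d$. Concretely: a family $\mathcal F\subset\w^{\uparrow\w}$ is finitely dominating iff for no ultrafilter $\U$ is $\mathcal F$ bounded in $(\w^\w,\leq_\U)$ after reorganizing into blocks; NCF makes $\mathfrak b(\U)$ independent of $\U$ and equal to $\mathfrak b$, and dually controls cofinalities, yielding $\mathrm{cov}(\mathfrak D_{\mathit{fin}})=\mathfrak d$.

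The main obstacle is the equality $\mathrm{cov}(\mathfrak D_{\mathit{fin}})=\mathfrak d$ under NCF: proving $\mathrm{cov}(\mathfrak D_{\mathit{fin}})\geq\mathfrak d$ requires showing that fewer than $\mathfrak d$ non-finitely-dominating sets cannot cover $\w^{\uparrow\w}$, which means manufacturing, from any small family of non-finitely-dominating sets, a single function not in their union. Each non-finitely-dominating $F_i$ admits an $x_i\in\w^\w$ with $x_i\not\leq^\ast\max$ of any finite subfamily; one wants to diagonalize the $x_i$'s into one function $x$, but the obstruction is precisely that "finitely dominating" is a $\max$-closed notion, so a naive diagonalization gives a function dominated by a single $\max$. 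This is where NCF (equivalently, the structure theory of near-coherence) enters: it guarantees that all the $x_i$ can be simultaneously ``rescaled'' via one finite-to-one function, after which the problem reduces to the classical fact $\cf(\mathfrak d)\leq$ covering-by-non-dominating $=\mathfrak d$. I expect this reduction — packaging the non-finitely-dominating sets through a common block partition provided by NCF — to be the technical heart of the argument, and the rest (the subgroup-theoretic lower bound and the citation of the $\leq\mathfrak d$ upper bound) to be comparatively routine.
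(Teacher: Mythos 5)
Your overall architecture matches the paper's: set $B_\alpha=\{f\in\w^{\uparrow\w}:S_f\subset G_\alpha\}$, note that these sets cover $\w^{\uparrow\w}$, and derive a contradiction from one of them being finitely dominating; then reduce the NCF clause to showing $\mathrm{cov}(\mathfrak D_{\mathit{fin}})=\mathfrak d$. However, there is a genuine gap at the key lemma, namely that for a finitely dominating family $F$ of increasing functions the set $\bigcup_{f\in F}S_f$ generates $\Sym(\w)$. You correctly flag the difficulty --- a permutation $\pi$ with $\pi,\pi^{-1}\leq^\ast\max\{f_0,\dots,f_n\}$ does not obviously factor through $S_{f_0},\dots,S_{f_n}$ --- but your proposed repairs do not close it: the discrepancy between $\pi$ and anything assembled from the $S_{f_i}$ is not supported on a finite set, so ``finite perturbations are absorbed'' does not apply, and a subgroup containing all finitely supported permutations (or of countable index) is still very far from being all of $\Sym(\w)$. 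The paper's argument has two ingredients you are missing. First, for a permutation $\pi$ all of whose orbits are finite, one chooses two interlocking interval sequences so that every orbit lies inside a single interval of one of them; on each orbit a \emph{single} $f_a\in F_0$ then dominates $\pi$ and $\pi^{-1}$, which yields a factorization $\pi=\circ_{f\in F_0}\pi_f$ with each $\pi_f\in S_f$. Second, the set $E$ of permutations with finite orbits is a dense $G_\delta$ in the Polish group $\Sym(\w)$, so $E\circ E=\Sym(\w)$ by the Baire Category Theorem, and hence the subgroup generated by $\bigcup_{f\in F}S_f$, which contains $E$, is everything. Without something like this second step you cannot get past the finite-orbit permutations.

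On the NCF clause your reduction is correct, but the mechanism is simpler than the ``common block partition'' you describe, and you misstate one fact: NCF gives $\mathfrak b(\F)=\mathfrak d$ (not $\mathfrak b$) for every ultrafilter $\F$ (Blass). Combining this with the observation that every non-finitely-dominating subset of $\w^{\uparrow\w}$ is $\leq_\F$-bounded for every ultrafilter $\F$, and that $(\w^{\uparrow\w},\leq_\F)$ is linearly ordered with cofinality $\mathfrak d$ under NCF, fewer than $\mathfrak d$ such sets cannot cover $\w^{\uparrow\w}$, so $\mathrm{cov}(\mathfrak D_{\mathit{fin}})\geq\mathfrak d$; no diagonalization or rescaling argument is needed. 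The upper bound $\cf^\ast(\Sym(\w))\leq\cf(\mathfrak d)\leq\mathfrak d$ is as you say.
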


Shelah and Tsaban \cite{SheTsa03} proved that $\max\{\mathfrak
b,\mathfrak g\}\leq \mathrm{cov}(\mathfrak D_{\mathit{fin}}) $, and
the strict inequality is consistent (cf. \cite{MST06}). Thus
Theorem~\ref{main1} improves the lower bound in $\mathfrak
g\leq\cf^\ast(\Sym(\w))$ \cite[Theorem~2.6]{To98}. Combining
Corollary~\ref{main_cor2} and the fact that there are no $Q$-points
under $\mathfrak u<\mathfrak s$ (cf.  \cite[Theorems~13.6.2,
13.8.1]{BZ}
), we get the following:

\begin{corollary}
If $\mathfrak u<\min\{\mathfrak s,\cf^\ast(\Sym(\w))\} $,  then NCF
holds.
\end{corollary}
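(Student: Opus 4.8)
The plan is to derive this statement directly from Corollary~\ref{main_cor2} together with the cited fact that $\mathfrak u<\mathfrak s$ forbids the existence of $Q$-points. So assume $\mathfrak u<\min\{\mathfrak s,\cf^\ast(\Sym(\w))\}$. First I would split this single inequality into its two constituent consequences: $\mathfrak u<\mathfrak s$ and $\mathfrak u<\cf^\ast(\Sym(\w))$.

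From $\mathfrak u<\mathfrak s$ and \cite[Theorems~13.6.2, 13.8.1]{BZ} I conclude that there is no $Q$-point on $\w$. Next, since $\mathfrak u<\cf^\ast(\Sym(\w))$, the hypothesis of Corollary~\ref{main_cor2} is satisfied, so its conclusion applies: every two ultrafilters that are not coherent to a $Q$-point are coherent, and in particular, in the absence of $Q$-points, every two ultrafilters whatsoever are coherent, i.e. NCF holds. This already finishes the argument.

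I do not expect a genuine obstacle here, since essentially all the work is hidden in Corollary~\ref{main_cor2} (hence ultimately in Theorem~\ref{main_gr}) and in the ZFC fact relating $\mathfrak u$, $\mathfrak s$, and the nonexistence of $Q$-points. The only point requiring a modicum of care is to verify that the hypothesis of Corollary~\ref{main_cor2} needs merely $\mathfrak u<\cf^\ast(\Sym(\w))$ and nothing about $\mathfrak s$, so that the two cited inputs are invoked independently and then combined; once this is noted, the proof is a one-line deduction.
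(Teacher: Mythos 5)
Your proposal is correct and follows exactly the paper's argument: the corollary is stated as the combination of Corollary~\ref{main_cor2} (whose hypothesis is just $\mathfrak u<\cf^\ast(\Sym(\w))$) with the fact that $\mathfrak u<\mathfrak s$ excludes $Q$-points. Nothing further is needed.
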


We do not know whether the inequality $\mathfrak
u<\cf^\ast(\Sym(\w))$ (or even $\mathfrak u<\cf(\Sym(\w))$) implies
NCF. This would be true if $\cf(\Sym(\w))\leq \hot{mcf}=\min\{\hot
b(\F):\F$ is an ultrafilter$\}$ (in particular,  if $\hot{mcf}$ is
attained at some ultrafilter not coherent to a $Q$-point). It
would also be interesting to establish whether NCF implies
$\cf(\Sym(\w))=\mathfrak d$.

This work is a continuation of our
previous paper \cite{BRZ}. We refer the reader to \cite{Va90} for
the definitions and basic properties of small cardinals
which are used but not defined in this paper. All filters are
assumed to be non-principal.

\section{Proofs}

The main technical tool for the
proofs of Theorems~\ref{main_mon} and \ref{main_gr} was developed in
\cite{BRZ}.
This will allow us to  prove some   stronger  technical statements
in this section, namely Propositions~\ref{main_mon_F} and
\ref{main_gr_F}.
 In order to formulate them
we need to recall some definitions.

Let $\F$ be a filter.
Following \cite{BZ06} (our definition of an $[\F]$-cover differs slightly
from the one given in \cite{BRZ,BZ06}, however,
 by  \cite[5.5.2, 5.5.3]{BZ} the two versions are equivalent), we define an
indexed cover $\la B_n : n\in\w \ra$ of a set $X$ to be an {\em
$[\F]$-cover} if there is  an increasing surjection $\phi:\w\to\w$
such that $\phi(\{n\in\w:x\in B_n\})\in\F$ for every $x\in X$.

A subset $X$ of a topological monoid $M$  is defined to be {\em
$[\F]$-Menger} if for every sequence $\la U_n:n\in\w\ra$ of
neighborhoods of 1 in $M$ there is a sequence $\la F_n:n\in\w\ra$ of
finite subsets of $M$ such that $\la U_n\cdot F_n\cap F_n\cdot
U_n:n\in\w\ra$ is an $[\F]$-cover of $X$. The latter happens
if and only if
$$X\subset\bigcup_{F\in\F}\bigcap_{n\in\phi(F)}U_n\cdot F_n\cap F_n\cdot U_n$$
for some monotone surjection $\phi:\w\to\w$.

\begin{definition} \label{de_cfp_F}
For a topological monoid (group) $X$ and a free filter $\F$ on $\w$
by $\cf^{\F}_{m}(X)$ (resp. $\cf^{\F}_{g}(X)$) we denote the minimal
length of  an increasing chain $\la X_i:i<\lambda\ra$ of  proper
submonoids (subgroups) $X_i$ of $X$ such that $
X=\bigcup_{i<\lambda}X_i$ and for every $[\F]$-Menger subset $H$ of
$X$
 there exists $i\in\lambda$ such that $H\subset X_i$.

If no such  chain exists, then we say that $\cf^{\F}_{m}(X)$ (resp.
$\cf^{\F}_{g}(X)$) is undefined.
\end{definition}

It is easy to check that $\cf^\ast_{m}(X)$ (resp. $\cf^\ast_{g}(X)$)
is   $\cf^{\mathfrak Fr}_{m}(X)$ (resp.
$\cf^{\mathfrak Fr}_{g}(X)$), where $\mathfrak Fr$ denotes the
Fr\'echet filter consisting of all cofinite subsets of $\w$.

Let $\F$ be an ultrafilter.
 A sequence $\la b_\alpha:\alpha<\mathfrak b(\F)\ra$ of  increasing
elements of $\w^\w$ is called a \emph{$\mathfrak b(\F)$-scale}, if
it is cofinal with respect to $\leq_\F$ and $b_\alpha\leq_\F
b_\beta$ for all $\alpha\leq\beta<\mathfrak b(\F)$.

 Let us denote  the family of all monotone surjections from $\w$ to $\w$ by $\mathcal S$.
 Following
\cite[\S 10.1]{BZ} (see also \cite{Can88})
we denote for an ultrafilter $\F$  by $\hot q(\F)$ the minimal size
of a subfamily $\Phi$ of $\mathcal S$ such that for every
$\psi\in\mathcal S$ there exists $\phi\in\Phi$ such that $[\phi\leq
\psi]\in\F$. It is clear that there exists a sequence
$\la\phi_\alpha:\alpha<\hot q(\F)\ra\in\mathcal S^{\hot q(\F)}$ such
that $[\phi_\beta<\phi_\alpha]\in\F$ for all $\beta>\alpha$ and for
every $\psi\in\mathcal S$ there exists $\alpha$ with the property
$[\phi_\alpha<\psi]\in\F$. Such a family will be called a
\emph{$\hot q(\F)$-scale}.

Cardinals $\hot b(\F)$ and $\hot q(\F)$ are
 the cofinality and the coinitiality of the linearly ordered set
$(\w^{\uparrow \w},\le_\F)$, which in a certain sense makes them
dual.

If an ultrafilter $\F$ is not
coherent to any $Q$-point  then $\hot b(\F)=\hot q(\F)$,
for a proof see \cite{LafZhu98,Ca89} or  \cite[10.2.5]{BZ}.
 On the other
hand, there
can be ultrafilters $\F$ with $\hot b(\F)\neq\hot q(\F)$, see
\cite{Can88}. As we shall see later, this means that
$\cf^{\F}_{g}(X)$ and $\cf^{\F}_{m}(X)$ are not always well-defined.

\begin{theorem} \label{koreknist}
Let $\F$ be an ultrafilter and
$X$  a first countable $\w$-bounded balanced topological monoid
 (resp. first countable topological group) and suppose that
  one of its finite powers  is not
  a Menger monoid.
\begin{itemize}
\item[$(1)$]  If the cardinal $\cf^{\F}_m(X)$ (resp. $\cf^{\F}_g(X)$) exists, then it is equal to $\hot b(\F)$
and $\hot b(\F)=\hot q(\F)$.
\item[$(2)$]  If $\F$ is not coherent to any $Q$-point, then the cardinal $\cf^{\F}_m(X)$ (resp. $\cf^{\F}_g(X)$) exists and hence
it is equal to $\hot b(\F)=\hot q(\F)$.
\item[$(3)$]  For the group $X=\mathrm{Auth}(\mathbb R_+)$ of the homeomorphisms of the half-line the cardinal $\cf^{\F}_m(X)$ exists
if and only if  $\cf^{\F}_g(X)$ exists if and only if $\F$ is not
coherent to a $Q$-point.
\end{itemize}
\end{theorem}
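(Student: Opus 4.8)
The plan is to deduce all three items from the two Propositions~\ref{main_mon_F} and \ref{main_gr_F} announced in this section (the stronger technical statements), together with the $[\F]$-Menger machinery of \cite{BRZ}. First I would prove item~$(1)$. For the lower bound $\cf^{\F}_m(X)\ge\hot b(\F)$, suppose $\la X_i:i<\lambda\ra$ is an increasing chain witnessing the existence of $\cf^{\F}_m(X)$. Since one finite power $X^k$ is not a Menger monoid, it is in particular not $[\F]$-Menger for a cofinal-below-$\hot b(\F)$-worth of ``rates'', and the standard trick of encoding a scale (a $\hot b(\F)$-scale $\la b_\alpha:\alpha<\hot b(\F)\ra$) into neighborhoods of $1$ produces, for each $\alpha<\hot b(\F)$, an $[\F]$-Menger set $H_\alpha\subset X$ that is \emph{not} $[\F]$-Menger ``at level'' $\beta$ for $\beta<\alpha$; since each $H_\alpha$ sits inside some $X_{i(\alpha)}$ and the $H_\alpha$ are strictly increasing in this sense, the map $\alpha\mapsto i(\alpha)$ is cofinal, giving $\lambda\ge\hot b(\F)$. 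The reverse inequality $\cf^{\F}_m(X)\le\hot b(\F)$, when the cardinal exists, is exactly the content of Proposition~\ref{main_mon_F} (resp. \ref{main_gr_F}); and the equality $\hot b(\F)=\hot q(\F)$ falls out because the construction of the chain of length $\hot q(\F)$ in those propositions simultaneously witnesses $\cf^{\F}_m(X)\le\hot q(\F)$, while the lower-bound argument gives $\cf^{\F}_m(X)\ge\hot b(\F)$, and one always has $\hot q(\F)\le\hot b(\F)$ for the chain to close up — so existence forces the two dual cardinals to coincide.

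Item~$(2)$ is then immediate: if $\F$ is not coherent to any $Q$-point, the cited results \cite{LafZhu98,Ca89} (or \cite[10.2.5]{BZ}) give $\hot b(\F)=\hot q(\F)$, so a $\hot q(\F)$-scale $\la\phi_\alpha:\alpha<\hot q(\F)\ra$ exists and has the same length as a $\hot b(\F)$-scale; feeding these matched scales into Proposition~\ref{main_mon_F} (resp. \ref{main_gr_F}) produces the required chain, so the cardinal exists and, by part~$(1)$, equals $\hot b(\F)=\hot q(\F)$. The only point to check is that $\mathrm{Auth}(\IR_+)$ and, more to the point, the hypotheses on $X$ in the propositions, are met — for $(2)$ as stated we just need $X$ to be a first countable $\w$-bounded balanced monoid (resp. first countable group) one of whose finite powers is non-Menger, which is the blanket assumption of the theorem.

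For item~$(3)$ the ``if'' direction is a special case of~$(2)$, once one checks that $\mathrm{Auth}(\IR_+)$, the group of self-homeomorphisms of $[0,\infty)$ with the compact-open topology, is first countable and that some finite power is not a Menger monoid — the latter by exhibiting, as for $\w^\w$, a sequence of neighborhoods of $\id$ against which no finite-set selection yields a cover, using that $\mathrm{Auth}(\IR_+)$ ``contains'' $\w^\w$ up to the relevant coarse structure. The substantive direction is ``only if'': assuming $\cf^{\F}_m(\mathrm{Auth}(\IR_+))$ (equivalently $\cf^{\F}_g$) exists, I must show $\F$ is not coherent to a $Q$-point. By part~$(1)$, existence already gives $\hot b(\F)=\hot q(\F)$; but that alone does not rule out coherence to a $Q$-point, so I expect one needs a genuinely group-specific obstruction. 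The idea is the converse construction: if $\F$ \emph{were} coherent to a $Q$-point, then along the $Q$-point side one can build, for an arbitrary candidate chain $\la X_i:i<\lambda\ra$, an $[\F]$-Menger subset $H$ of $\mathrm{Auth}(\IR_+)$ that is cofinal in a way no single proper subgroup $X_i$ can absorb (because $Q$-point-coherence lets one ``spread out'' the totally bounded pieces so that any subgroup containing $H$ already contains a generating set of the whole group) — contradiction, so the cardinal cannot exist. The main obstacle is precisely this last step: translating the combinatorics of a $Q$-point into a concrete subset of $\mathrm{Auth}(\IR_+)$ whose presence in a subgroup forces that subgroup to be improper, i.e. a sharp ``$[\F]$-Menger but group-generating'' witness; everything else is bookkeeping with scales and the equivalence of the $[\F]$-cover definitions from \cite[5.5.2, 5.5.3]{BZ}.
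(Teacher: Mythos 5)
There are genuine gaps in both directions of item $(1)$ and in item $(3)$. For the lower bound in $(1)$, the property you attribute to your sets $H_\alpha$ (being ``not $[\F]$-Menger at level $\beta$'' for $\beta<\alpha$) is not what the argument needs: nothing in it prevents a single proper submonoid $X_i$ from containing \emph{all} of the $H_\alpha$, and even granting that $\alpha\mapsto i(\alpha)$ is cofinal in $\lambda$, that only yields $\cf(\lambda)\leq\hot b(\F)$, not $\lambda\geq\hot b(\F)$. The missing ingredient is a \emph{covering} chain: using the $\w$-boundedness (the countable sets $C_n$ with $U_nC_n=C_nU_n=X$ from the proof of Proposition~\ref{main_mon_F}) and a $\hot q(\F)$-scale, one builds an \emph{increasing} chain $\la Z_\alpha:\alpha<\hot q(\F)\ra$ of $[\F]$-Menger subspaces with $\bigcup_\alpha Z_\alpha=X$. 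Then if $\kappa=\cf^{\F}_m(X)<\hot q(\F)$, regularity of $\hot q(\F)$ forces one $X_\xi$ to contain cofinally many $Z_\alpha$ and hence to equal $X$, contradicting properness. The same chain settles the upper bound, which you cannot delegate to Proposition~\ref{main_mon_F}: that proposition requires an ultrafilter $\LL\supset\F$ not coherent to a $Q$-point, which is exactly the hypothesis that is \emph{unavailable} in $(1)$ (it is what separates $(2)$ from $(1)$). Instead, if $\kappa>\hot q(\F)$ (note $\kappa$ is regular by minimality of the witnessing chain), all the $Z_\alpha$ land in a single $X_\xi$, again forcing $X_\xi=X$. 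Your claim that ``one always has $\hot q(\F)\leq\hot b(\F)$'' is unjustified and not needed; Canjar's examples show the two cardinals can differ in general, and the equality $\hot b(\F)=\hot q(\F)$ in $(1)$ comes from running the same two-sided cofinality argument with an analogous covering chain indexed by a $\hot b(\F)$-scale, which pins $\kappa$ to both cardinals simultaneously.

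Item $(2)$ is fine as you state it (existence from Propositions~\ref{main_mon_F} and \ref{main_gr_F} with $\LL=\F$, the rest from $(1)$). For item $(3)$, however, you correctly isolate the substantive ``only if'' direction --- producing, when $\F$ is coherent to a $Q$-point, an $[\F]$-Menger subset of $\mathrm{Auth}(\mathbb R_+)$ that generates the whole group, so that no proper subgroup of a witnessing chain can absorb it --- but you then leave precisely this step open. This is not bookkeeping: it is the content of \cite[Theorem~6.4]{BRZ} (the same result showing that the coherence hypothesis in Lemma~\ref{tt} is essential), and the paper's proof of $(3)$ consists of invoking it. Without that external input, or a construction replacing it, item $(3)$ is unproven in your write-up.
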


We postpone the proof of Theorem~\ref{koreknist}
for the moment. It is clear that for a topological group $X$
 the existence of $\cf^{\F}_g(X)$ implies the existence of $\cf^{\F}_m(X)$, and in this case
 $\cf^{\F}_m(X)\le \cf^{\F}_g(X)$.

\begin{question}
 Is the existence of $\cf^{\F}_g(X)$ equivalent to the existence of $\cf^{\F}_m(X)$
(at least for the group $\Sym(\w)$)? Are these cardinals always
equal (if they exist)?
\end{question}

The following result was established in \cite{BRZ}.

\begin{lemma} \label{tt} A topological group (resp. balanced topological monoid) $H$ is $[\LL]$-Menger for some ultrafilter $\LL$ coherent to no $Q$-point if and only if $H$ is algebraically generated by an $[\LL]$-Menger subspace $X\subset H$.
\end{lemma}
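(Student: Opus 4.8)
The plan is to prove Lemma~\ref{tt} by establishing both implications, with the nontrivial direction being that algebraic generation by an $[\LL]$-Menger subspace implies that $H$ itself is $[\LL]$-Menger. The forward direction is essentially trivial: if $H$ is $[\LL]$-Menger, then $X:=H$ is an $[\LL]$-Menger subspace that algebraically generates $H$, so there is nothing to do.

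For the reverse direction, suppose $X\subset H$ is $[\LL]$-Menger and algebraically generates $H$ (as a group, or as a monoid in the balanced case); since $\LL$ is an ultrafilter coherent to no $Q$-point, we have $\hot b(\LL)=\hot q(\LL)$, and this equality is what makes the argument work. First I would note that because $X$ generates $H$, we can write $H=\bigcup_{k\in\w}X_k$ where $X_k$ is the set of products of length at most $k$ of elements of $X\cup X^{-1}\cup\{1\}$ (in the monoid case, drop the inverses). Each $X_k$ is a continuous image, under the $k$-fold multiplication map, of a finite power of $X\cup X^{-1}\cup\{1\}$; since the $[\LL]$-Menger property is preserved by finite powers (for balanced monoids / topological groups, which is exactly the kind of closure property established in \cite{BRZ}) and by continuous images into topological monoids, each $X_k$ is $[\LL]$-Menger. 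Then I would invoke the fact that a countable union of $[\LL]$-Menger subspaces is again $[\LL]$-Menger: given a sequence $\la U_n:n\in\w\ra$ of neighborhoods of $1$, partition $\w$ into countably many infinite pieces $\{N_k:k\in\w\}$, apply the $[\LL]$-Menger property of $X_k$ to the subsequence $\la U_n:n\in N_k\ra$ to get finite sets $F_n$ ($n\in N_k$) with $\la U_n F_n\cap F_n U_n:n\in N_k\ra$ an $[\LL]$-cover of $X_k$ via some monotone surjection $\phi_k$, and then amalgamate the witnessing surjections into a single monotone surjection of $\w$ using the $\hot q(\LL)=\hot b(\LL)$ hypothesis to control the ``speed'' of the combined cover.

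The main obstacle I expect is precisely this amalgamation of the witnessing monotone surjections $\phi_k$ into one monotone surjection $\phi:\w\to\w$ witnessing that $\la U_n F_n\cap F_n U_n:n\in\w\ra$ is an $[\LL]$-cover of all of $H=\bigcup_k X_k$ simultaneously. The definition of $[\LL]$-cover requires a \emph{single} $\phi$ with $\phi(\{n:x\in B_n\})\in\LL$ for every $x$; for $x\in X_k$ we only know $\phi_k$ works, and we must merge these so that on each block $N_k$ the behaviour of $\phi$ restricted to $N_k$ dominates (in the $\leq_\LL$ sense) a copy of $\phi_k$. This is where the interplay between $\hot b(\LL)$ and $\hot q(\LL)$ — the cofinality and coinitiality of $(\w^{\uparrow\w},\le_\LL)$ — is used: since $\LL$ is not coherent to a $Q$-point these coincide, which (via the characterization of $[\F]$-Menger through $\bigcup_{F\in\F}\bigcap_{n\in\phi(F)}\cdots$) allows the countably many constraints to be reconciled along a cofinal/coinitial scale. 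I would carry out this reconciliation by re-indexing: choose the blocks $N_k$ so sparse that $\phi_k$'s growth demands on $N_k$ can all be met by one $\phi$, appealing to \cite[5.5.2, 5.5.3]{BZ} on the equivalence of the differing definitions of $[\F]$-cover to move freely between the ``increasing surjection $\phi$'' formulation and the ``$\bigcup_{F\in\F}\bigcap_{n\in\phi(F)}$'' formulation. The balanced hypothesis in the monoid case is needed only to know that finite powers and the multiplication maps behave well enough for the $[\LL]$-Menger property to transfer, exactly as in the corresponding results of \cite{BRZ}.
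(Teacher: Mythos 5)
The paper offers no proof of Lemma~\ref{tt} to compare against: it is imported wholesale from \cite{BRZ}, where it is one of the main theorems. Your skeleton --- the trivial forward direction, then $H=\bigcup_{k}X_k$ with $X_k$ the words of length at most $k$, each $X_k$ shown $[\LL]$-Menger, and a countable-union/amalgamation step --- is the right general shape, but both substantive steps have gaps. For the first: ``each $X_k$ is a continuous image of a finite power, and $[\LL]$-Mengerness is preserved by finite powers and continuous images'' does not go through, because $[\LL]$-Mengerness as defined here is not an intrinsic topological property of $X_k$ but a boundedness property relative to neighborhoods of $1$ in the ambient monoid; the hypothesis on $X$ is only this boundedness, not the covering-type property that survives continuous images. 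What is actually needed is that a product $A\cdot B$ of two $[\LL]$-Menger subsets is again $[\LL]$-Menger, and there the order of quantifiers bites: the neighborhoods witnessing the property for $A$ must be fixed before the finite sets produced for $B$ are known, and moving a neighborhood past a finite set costs a conjugation one cannot control in advance. This is exactly where balancedness (in the monoid case), the two-sided form $F_nU_n\cap U_nF_n$, and the $\w$-bounded approximating sets $C_n$ of \cite[Proposition~7.1]{BRZ} enter; it is real work, not a formal consequence of closure under powers.

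The second gap is more serious: the mechanism you name for the amalgamation cannot be the right one. You say the equality $\hot b(\LL)=\hot q(\LL)$ ``is what makes the argument work.'' But under CH one has $\hot b(\F)=\hot q(\F)=\w_1$ for \emph{every} ultrafilter $\F$ (both cardinals are uncountable by a routine diagonalization and are at most $\mathfrak c=\w_1$), in particular for Ramsey ultrafilters, which exist under CH and are coherent to a $Q$-point (namely themselves); yet the paper stresses, citing \cite[Theorem~6.4]{BRZ} and Theorem~\ref{koreknist}(3), that the conclusion of Lemma~\ref{tt} genuinely fails for ultrafilters coherent to a $Q$-point. Hence any argument whose only use of the hypothesis is the cardinal equality $\hot b(\LL)=\hot q(\LL)$ would prove a false statement. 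The hypothesis has to be used combinatorially: roughly, for every monotone surjection $\phi$ the pushforward $\phi(\LL)$ fails to select one point from each cell of some interval partition, and it is this that lets one accelerate and merge the countably many witnessing surjections (and absorb the index shift accumulated by taking products of unbounded length). That combinatorial core is the heart of the theorem in \cite{BRZ} and is absent from your sketch.
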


The condition in Lemma~\ref{tt} that $\LL$ is not coherent to any
$Q$-point is essential by \cite[Theorem~6.4]{BRZ}. However, we do
not know whether it can be omitted from Theorem~\ref{main_mon},
Theorem~\ref{main_gr} or Corollary~\ref{main_cor0}.

Theorem~\ref{main_mon} is a
special case of  the
following result:

\begin{proposition} \label{main_mon_F}
Let $X$ be a first countable $\w$-bounded balanced topological
monoid such that one of its finite powers is not
a Menger monoid, and let $\F$ be a filter on $\w$. If there exists
an ultrafilter $\LL\supset\F$ that is not coherent to any $Q$-point,
then $\cf^{\F}_{m}(X)$ is well-defined and is
less than or equal to
 $\mathfrak b(\LL)$.
\end{proposition}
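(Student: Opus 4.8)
The plan is to reduce Proposition~\ref{main_mon_F} to a construction indexed by a $\hot b(\LL)$-scale. Since $\LL\supset\F$ is an ultrafilter not coherent to any $Q$-point, by the result cited after the definition of $\hot q(\F)$ we have $\hot b(\LL)=\hot q(\LL)$, so we may fix simultaneously a $\hot b(\LL)$-scale $\la b_\alpha:\alpha<\hot b(\LL)\ra$ of increasing functions cofinal in $(\w^{\uparrow\w},\le_\LL)$ and a $\hot q(\LL)$-scale $\la\phi_\alpha:\alpha<\hot b(\LL)\ra$ of monotone surjections coinitial in the same order. Fix also a decreasing neighborhood base $\la U_n:n\in\w\ra$ at $1$ in $X$, and, using that some finite power $X^k$ is not a Menger monoid together with $\w$-boundedness, fix a witnessing sequence $\la W_n:n\in\w\ra$ of neighborhoods of $1$ in $X^k$ for the failure of the Menger property, and a countable dense-mod-$U$ set for each $U$.

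The core idea is to let $G_\alpha$ be the submonoid of $X$ generated by the set
$$Y_\alpha \;=\; \bigcup_{F\in\LL}\ \bigcap_{n\in\phi_\alpha(F)}\ U_n\cdot C_{b_\alpha(n)}\cap C_{b_\alpha(n)}\cdot U_n,$$
where $C_m$ denotes the set of the first $m$ elements of a fixed countable $U_n$-net. By construction each $Y_\alpha$ is an $[\LL]$-Menger subspace of $X$, so by Lemma~\ref{tt} the group... rather, the \emph{monoid} $G_\alpha=\la Y_\alpha\ra$ is $[\LL]$-Menger, and in particular $\sigma$-bounded, hence proper: since some finite power of $X$ is not a Menger monoid, $X$ itself is not $\sigma$-bounded, so $G_\alpha\ne X$. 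Monotonicity $G_\alpha\subseteq G_\beta$ for $\alpha\le\beta$ follows because $b_\alpha\le_\LL b_\beta$ and $\phi_\beta\le_\LL\phi_\alpha$ make $Y_\alpha\subseteq Y_\beta$ modulo shrinking the witnessing $F\in\LL$. The covering property $\bigcup_\alpha G_\alpha=X$ uses $\w$-boundedness and first countability: any $x\in X$ gives a function $n\mapsto$ (least index of a net element within $U_n$ of $x$), which is dominated $\le_\LL$ by some $b_\alpha$, placing $x\in Y_\alpha$. Finally, for the absorption property, any $[\F]$-Menger (hence $[\LL]$-Menger, as $\F\subseteq\LL$) subset $H$ yields, on unwinding the definition, a monotone surjection $\psi$ and net-index bounds; choosing $\alpha$ with $[\phi_\alpha<\psi]\in\LL$ and $b_\alpha$ dominating the index function gives $H\subseteq Y_\alpha\subseteq G_\alpha$.

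The step I expect to be the main obstacle is making the two opposed scales interact correctly in the definition of $Y_\alpha$ — that is, arranging that a single index $\alpha$ simultaneously controls the ``how many net elements'' direction (via $b_\alpha$) and the ``which block surjection'' direction (via $\phi_\alpha$) so that both monotonicity of $\alpha\mapsto G_\alpha$ and absorption of an arbitrary $[\F]$-Menger $H$ hold. The equality $\hot b(\LL)=\hot q(\LL)$ is exactly what lets us index both scales by the same ordinal, but one must still check that the $[\LL]$-cover reformulation ``$X\subset\bigcup_{F\in\LL}\bigcap_{n\in\phi(F)}U_n F_n\cap F_n U_n$'' from the paragraph before Definition~\ref{de_cfp_F} is preserved under the relevant monotone changes of $\phi$ and of the finite sets $F_n$; this is where the balanced and $\w$-bounded hypotheses enter, guaranteeing that enlarging the $F_n$ to initial segments of fixed countable nets suffices and that products of two such expressions (needed because $Y_\alpha$ generates a monoid, not just a group) stay $[\LL]$-Menger, via Lemma~\ref{tt} and the fact that a finite power of $X$ failing Menger forces properness. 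I would carry out the argument in this order: (1) fix bases and the two scales; (2) define $Y_\alpha$ and $G_\alpha$; (3) prove each $G_\alpha$ is proper using Lemma~\ref{tt}; (4) prove monotonicity; (5) prove $\bigcup_\alpha G_\alpha=X$; (6) prove the $[\F]$-Menger absorption; (7) conclude $\cf^{\F}_m(X)\le\hot b(\LL)$.
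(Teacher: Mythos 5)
Your overall strategy is the same as the paper's: fix a $\hot b(\LL)$-scale and a $\hot q(\LL)$-scale on the same index set (using $\hot b(\LL)=\hot q(\LL)$ for ultrafilters not coherent to a $Q$-point), build $[\LL]$-Menger kernels $Y_\alpha$ from countable nets truncated by $b_\alpha$ and shifted by $\phi_\alpha$, let $X_\alpha$ be the submonoid generated by $Y_\alpha$, invoke Lemma~\ref{tt} to keep $X_\alpha$ $[\LL]$-Menger, and absorb an arbitrary $[\F]$-Menger $H$ by choosing $\alpha$ dominating its witnessing data in both scales. Two remarks, one of which is a genuine error.

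The error is in your properness step. You write that $G_\alpha$ is ``$[\LL]$-Menger, and in particular $\sigma$-bounded, hence proper.'' The implication is backwards: $\sigma$-bounded implies ($[\LL]$-)Menger, not conversely, and the sets $Y_\alpha$ (uncountable unions over $F\in\LL$ of intersections) have no reason to be $\sigma$-bounded. The correct argument, and the reason the hypothesis is phrased in terms of a finite power of $X$ failing to be a Menger monoid rather than $X$ failing to be $\sigma$-bounded, is this: if $X_\alpha=X$, then $X$ is $[\LL]$-Menger; the $[\LL]$-Menger property (unlike the plain Menger property) is preserved by finite powers, so $X^k$ is $[\LL]$-Menger and in particular a Menger monoid, contradicting the hypothesis. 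You allude to finite-power preservation in your closing paragraph, but the explicit deduction you give would not survive scrutiny and must be replaced by this one.

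The second remark is presentational but worth fixing. Your $Y_\alpha$ takes, at position $n$, only the initial segment $C_{b_\alpha(n)}$ of a single net (the $U_n$-net), whereas the paper's kernel takes $\{c_{k,m}:\phi_\alpha(n)\le k\le n,\ m\le b_\alpha(n)\}$, i.e.\ a band of net levels between $\phi_\alpha(n)$ and $n$. The band is what makes the absorption step a literal inclusion of finite sets: the witnessing data for $H$ lives at net level $\phi(n)$ for some other surjection $\phi$ with $\phi_\alpha\le_\LL\phi$, and since $\phi_\alpha(n)\le\phi(n)\le n$ on an $\LL$-large set, that level is already inside the band. With a single level per position you would instead have to iterate the refinement property of the nets ($FU_{n+1}\cap U_{n+1}F\subset F'U_n\cap U_nF'$) from level $\phi(n)$ down to level $n$, and then re-dominate the resulting index function --- doable, but exactly the bookkeeping you identified as the main obstacle, and the band-of-levels trick dissolves it.
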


\begin{proof}
 Let $\LL\supset\F$ be an
ultrafilter that is not coherent to any $Q$-point,   $\la
b_\alpha:\alpha<\mathfrak b(\LL)\ra$ be a  $\mathfrak b(\LL)$-scale,
and $\la \phi_\alpha:\alpha<\mathfrak q(\LL)=\hot b(\LL)\ra$ be a
$\mathfrak q(\LL)$-scale. Assume that $X^k$ is not a Menger monoid
for some $k\in\w$. Let $\{ U_n: n\in\w\}$ be a local base at the
neutral element $1$ of $X$. Without loss of generality, we may
assume that $U_{n+1}^3\subset U_n$ for all $n\in\w$.
 Applying
\cite[Proposition~7.1]{BRZ}, we can additionally assume that there
exists a sequence
  $\la C_n:n\in\w\ra$  of
countable subsets of $X$ such that $U_n\cdot C_n= C_n\cdot U_n=X$
for all $n$, and for every  $F\in [X]^{<\w}$ there exists $F'\in
[C_n]^{<\w}$   such that $FU_{n+1}\cap U_{n+1}F\subset F' U_n\cap
U_n F'$. Fix an enumeration $\{c_{n,m}:m\in\w\}$ of $C_n$. For a
pair $(\phi,b)\in\mathcal S\times \w^\w$ we set
\begin{eqnarray*}
Y_{\phi,b}= \bigcup_{L\in\LL}\bigcap_{n\in L} U_{\phi(n)}\cdot
\{c_{k,m}:\phi(n)\leq k\leq n,\; m\leq b(n)\}\cap \\
\cap \{c_{k,m}:\phi(n)\leq k\leq n,\; m\leq b(n)\}\cdot U_{\phi(n)}
\end{eqnarray*}
 and denote by
 $X_\alpha$ the submonoid of $X$ generated by
$Y_{\phi_\alpha,b_\alpha}$. A direct verification shows that
$Y_{\phi,b}$ is an $[\LL]$-Menger subset of $X$ for arbitrary pair
$(\phi,b)\in\mathcal S\times\w^\w$ (cf. e.g., the proof of
\cite[Lemma~3.2]{BRZ}), and hence by Lemma~\ref{tt}
 $X_\alpha$  is an $[\LL]$-Menger
submonoid of $X$. Thus $\la X_\alpha:\alpha<\hot{b}(\LL)\ra$ is an
increasing sequence of $[\LL]$-Menger submonoids of $X$. Since $X^k$
is  not a Menger monoid and the $[\LL]$-Menger property is preserved
by finite powers \cite[Corollary~3.5]{BRZ}, each $X_\alpha$ is a
proper submonoid of $X$.

It suffices to show that each $[\F]$-Menger submonoid $H$ of $X$ is
contained in some $X_\alpha$. Given such   $H$ let us find
an increasing $f\in\w^\w$ and $\phi\in\mathcal S$ such that
$$ H\subset \bigcup_{F\in\F}\bigcap_{n\in F} U_{\phi(n)}\cdot \{c_{\phi(n),m}:m\leq f(n)\}\cap \{c_{\phi(n),m}:m\leq f(n)\}\cdot U_{\phi(n)}. $$
(Such $f$ and $\phi$ can be easily constructed by the definition of
the $[\F]$-Menger property.)

Choose $\alpha$ such that $f\leq_{\LL} b_\alpha$ and
$\phi_\alpha\leq_{\LL}\phi$. We claim that $H\subset X_\alpha$.
Indeed, let us fix  $h\in H$ and pick $F_0\in\F$ such that $$h\in
\bigcap_{n\in F_0} U_{\phi(n)}\cdot \{c_{\phi(n),m}:m\leq f(n)\}\cap
\{c_{\phi(n),m}:m\leq f(n)\}\cdot U_{\phi(n)}.$$ Set
$A=[\phi_\alpha\leq\phi]$,  $B=[f\leq b_\alpha]$, and observe that
$A,B  \in\LL$. Then
\begin{eqnarray*}
 h \in \bigcap_{n\in F_0} U_{\phi(n)}\cdot \big\{c_{\phi(n),m}:m\leq f(n)\big\}\cap \big\{c_{\phi(n),m}:m\leq f(n)\big\}
\cdot U_{\phi(n)} \subset\\
\subset \bigcap_{n\in F_0\cap A} U_{\phi_\alpha(n)}\cdot \big\{c_{k,m}:\phi_\alpha(n)\leq k\leq n, m\leq f(n)\big\}\cap \\
\cap \big\{c_{k,m}:\phi_\alpha(n)\leq k\leq n, m\leq f(n)\big\} \cdot U_{\phi_\alpha(n)} \subset \\
 \subset \bigcap_{n\in F_0\cap A\cap B }  U_{\phi_\alpha(n)}\cdot \big\{c_{k,m}:\phi_\alpha(n)\leq k\leq n, m\leq b_\alpha(n)\big\}\cap \\
\cap \big\{c_{k,m}:\phi_\alpha(n)\leq k\leq n, m\leq
b_\alpha(n)\big\} \cdot U_{\phi_\alpha(n)} \subset X_\alpha,
\end{eqnarray*}
which completes  our proof.
\end{proof}

Theorem~\ref{main_gr} is a consequence of  the following:

\begin{proposition} \label{main_gr_F}
Let $G$ be an $\w$-bounded topological group such that one of its
finite powers  is not
a Menger monoid and let $\F$ be a filter on $\w$. If there exists an
ultrafilter $\LL\supset\F$ that is not coherent to any $Q$-point, then
$\cf^{\F}_{g}(G)$ is well-defined and is
less than or equal to
 $\mathfrak b(\LL)$.
\end{proposition}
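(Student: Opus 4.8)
The plan is to follow exactly the template established in the proof of Proposition~\ref{main_mon_F}, making only the modifications forced by the move from balanced $\w$-bounded monoids to arbitrary $\w$-bounded topological groups. First I would fix an ultrafilter $\LL\supset\F$ not coherent to any $Q$-point, choose a $\mathfrak b(\LL)$-scale $\la b_\alpha:\alpha<\hot b(\LL)\ra$ and a $\mathfrak q(\LL)$-scale $\la\phi_\alpha:\alpha<\hot q(\LL)=\hot b(\LL)\ra$ (the equality $\hot b(\LL)=\hot q(\LL)$ is available since $\LL$ is coherent to no $Q$-point), and fix $k$ with $G^k$ not a Menger monoid together with a decreasing local base $\{U_n:n\in\w\}$ at $1$ satisfying $U_{n+1}^3\subset U_n$. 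Since $G$ is an $\w$-bounded \emph{group}, every $U_n$ admits a countable set $C_n$ with $C_n U_n=G$, and by taking $C_n$ symmetric (closed under inversion) and enlarging it we may also arrange $U_n C_n=G$ and the refinement property that for every $F\in[G]^{<\w}$ there is $F'\in[C_n]^{<\w}$ with $FU_{n+1}\cap U_{n+1}F\subset F'U_n\cap U_n F'$; this is again \cite[Proposition~7.1]{BRZ}, which applies because a topological group is automatically balanced (every point is two-sided balanced). Fix enumerations $C_n=\{c_{n,m}:m\in\w\}$.

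Next I would define, for $(\phi,b)\in\mathcal S\times\w^\w$, the same sets $Y_{\phi,b}$ as in Proposition~\ref{main_mon_F}, and let $X_\alpha$ be now the \emph{subgroup} of $G$ generated by $Y_{\phi_\alpha,b_\alpha}$. As before, each $Y_{\phi,b}$ is an $[\LL]$-Menger subspace of $G$ (same verification as in \cite[Lemma~3.2]{BRZ}), so by Lemma~\ref{tt} applied in its group form each $X_\alpha$ is an $[\LL]$-Menger subgroup; since the $[\LL]$-Menger property passes to finite powers \cite[Corollary~3.5]{BRZ} and $G^k$ is not Menger, each $X_\alpha$ is proper. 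Monotonicity of $\la X_\alpha\ra$ follows from $[\phi_\beta<\phi_\alpha]\in\LL$ and $b_\alpha\le_\LL b_\beta$ for $\beta>\alpha$, exactly as in the monoid case. It remains to check $\bigcup_\alpha X_\alpha=G$: given $g\in G$ pick $n$ with $g\in C_0$, say $g=c_{0,m_0}$; then choosing $\alpha$ large enough that $\phi_\alpha$ takes the value $0$ on a $\LL$-large set below $n$ and $b_\alpha(n)\ge m_0$ there, one gets $g\in Y_{\phi_\alpha,b_\alpha}\subset X_\alpha$ — this is the routine "every point is captured" argument.

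The core of the proof is showing that every $[\F]$-Menger \emph{subgroup} $H$ of $G$ lies in some $X_\alpha$, and here I would copy the final part of the proof of Proposition~\ref{main_mon_F} verbatim: from the definition of $[\F]$-Mengerness produce an increasing $f\in\w^\w$ and $\phi\in\mathcal S$ with $H\subset\bigcup_{F\in\F}\bigcap_{n\in F}U_{\phi(n)}\cdot\{c_{\phi(n),m}:m\le f(n)\}\cap\{c_{\phi(n),m}:m\le f(n)\}\cdot U_{\phi(n)}$, choose $\alpha$ with $f\le_\LL b_\alpha$ and $\phi_\alpha\le_\LL\phi$, and run the same three-inclusion chain through $A=[\phi_\alpha\le\phi]\in\LL$ and $B=[f\le b_\alpha]\in\LL$ to conclude $h\in X_\alpha$ for every $h\in H$. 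The only genuine point requiring attention — the main obstacle, such as it is — is verifying that the \emph{group} version of Lemma~\ref{tt} and of \cite[Proposition~7.1]{BRZ} really do apply: that is, confirming that an arbitrary topological group, not assumed balanced or first countable a priori, satisfies the hypotheses needed (first countability is not assumed in Proposition~\ref{main_gr_F}, so one must use only $\w$-boundedness plus the automatic balancedness of groups, and check that the cited results are stated in that generality for the group case). Once that bookkeeping is settled, the argument is structurally identical to the monoid proof, so I would present it compactly, pointing to Proposition~\ref{main_mon_F} for the repeated computations and only spelling out the places where "submonoid generated by" is replaced by "subgroup generated by" and where symmetry of $C_n$ is used.
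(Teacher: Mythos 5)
There is a genuine gap, and it is exactly the point you flag at the end and then dismiss as ``bookkeeping'': Proposition~\ref{main_gr_F} does \emph{not} assume that $G$ is first countable, whereas your construction begins by fixing ``a decreasing local base $\{U_n:n\in\w\}$ at $1$'' and builds the sets $Y_{\phi,b}$ and the refinement data $C_n$ relative to that countable base. For an arbitrary $\w$-bounded group no such countable base need exist, so the template of Proposition~\ref{main_mon_F} cannot be run on $G$ directly; checking that the cited results ``are stated in that generality'' cannot succeed, because they are not, and the whole indexing scheme $U_{\phi(n)}$, $c_{k,m}$ collapses without a countable local base. This is not a side condition to verify but the main obstruction the group version has to overcome.

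The paper's proof handles it by a reduction that is absent from your proposal: by Guran's theorem an $\w$-bounded group embeds into a product $\prod_{i\in I}Q_i$ of second countable groups; one then finds a \emph{countable} $J\subset I$ such that the (first countable) projection $H=\mathrm{pr}_J(G)$ still has a finite power that is not a Menger monoid (this step itself needs an argument: the neighbourhoods witnessing non-Mengerness of $G^k$ can be taken to depend on finitely many coordinates, and one collects those coordinates into $J$). The first-countable argument of Proposition~\ref{main_mon_F}, in its group form, is then applied to $H$, and the chain $\la\mathrm{pr}_J^{-1}(H_\alpha)\ra$ of preimages — proper subgroups of $G$ whose union is $G$, and which absorb every $[\F]$-Menger subgroup because continuous homomorphic images preserve the $[\F]$-Menger property — witnesses $\cf^{\F}_g(G)\le\hot b(\LL)$. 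Your treatment of the first countable case itself (subgroup generated by $Y_{\phi_\alpha,b_\alpha}$, the group form of Lemma~\ref{tt}, the three-inclusion computation) is fine, but without the Guran reduction the proposal only proves the proposition under an additional first countability hypothesis.
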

\begin{proof}
By a  result of Guran \cite{Gu81}, $G$ is topologically
isomorphic
 to a subgroup of a product $\prod_{i\in I} Q_i$, where each $Q_i$
is a second countable group. There exists  $J\in [I]^\w$ with the
property that one of the finite powers of $H:=\mathrm{pr}_J(G)$ is
not a Menger monoid. Indeed, let $k\in\w$ be such that $G^k$ is not
a Menger monoid. There exists a sequence $\la U_n:n\in\w\ra$ of open
neighbourhoods of the neutral element of $G$ such that
$G^k\neq\bigcup_{n\in\w}F_n\:U_n^k\cap U_n^k\:F_n$ for any sequence
$\la F_n:n\in\w\ra$ of finite subsets of $G^k$. Shrinking $U_n$, if
necessary, we may additionally assume that $U_n=\prod_{i\in
J_n}W_{i,n}\times\prod_{i\in I\setminus J_n}Q_i$, where $J_n$ is a
finite subset of $I$ and $W_{i,n}$ is an open neighbourhood of the
neutral element of $Q_i$. Set $J=\bigcup_{n\in\w}J_n$,
$H=\mathrm{pr}_J(G)$, and $V_n=\prod_{i\in
J_n}W_{i,n}\times\prod_{i\in J\setminus J_n}Q_i$. It follows from
the above  that $H^k\neq\bigcup_{n\in\w}K_n\:V_n^k\cap V_n^k\:K_n$
for any sequence $\la K_n:n\in\w\ra$ of finite subsets of $H^k$,
which means that $H^k$ is not a Menger monoid.

 By applying
the same argument as in the proof of Proposition~\ref{main_mon_F}
to the (first countable) group $H$, we
conclude that there exists an appropriate increasing chain $\la
H_\alpha:\alpha<\hot{b}(\LL)\ra$ of proper subgroups of $H$ such
that $H=\bigcup_{\alpha}H_\alpha$. Now $\la
\mathrm{pr}_J^{-1}(H_\alpha):\alpha<\hot{b}(\LL)\ra$ is a
witness for $\cf^{\F}_{g}(G)\leq\hot{b}(\LL)$, which completes our
proof.
\end{proof}
\smallskip

\noindent\textit{Proof of Theorem~\ref{koreknist}.} \
(1) Suppose that $\kappa:=\cf^{\F}_m(X)$ exists and $\kappa<\hot
q(\F)$. All other cases ($\kappa>\hot q(\F)$, $\kappa<\hot b(\F)$,
$\kappa>\hot b(\F)$, or $X$ is a topological group, $\cf^{\F}_g(X)$
exists and $\cf^{\F}_g(X)<\hot q(\F)$, $\cf^{\F}_g(X)>\hot q(\F)$,
$\cf^{\F}_g(X)<\hot b(\F)$, or $\cf^{\F}_g(X)>\hot b(\F)$) are
analogous.

We use the notations from the proof of Proposition~\ref{main_mon_F}.
For every $\alpha<\hot q(\F)$ let
$$
Z_\alpha = \bigcup_{F\in\F}\bigcap_{n\in F} U_{\phi_\alpha(n)}\cdot
\{c_{\phi_\alpha(n),m}: m\leq n\}\cap  \{c_{\phi_\alpha(n),m}: m\leq
n\}\cdot U_{\phi_\alpha(n)}
$$
and observe that $\la Z_\alpha: \alpha<\hot q(\F)\ra$ is an
increasing sequence of $[\F]$-Menger subspaces of $X$ covering $X$.
Let $\la X_\xi:\xi<\kappa\ra$ be a sequence of proper submonoids of
$X$ witnessing for $\cf^{\F}_{m}(X)=\kappa$. Since $\hot q(\F)$ is
regular and for every $\alpha<\hot q(\F)$ there exists $\xi<\kappa$
with $Z_\alpha\subset X_\xi$, we conclude that there exists $\xi$
such that $X_\xi\supset Z_\alpha$ for cofinally many $\alpha\in\hot
q(\F)$, which means $X_\xi=X$ and thus contradicts the assumption
that  $X_\xi$ is a proper submonoid of $X$.
\smallskip

(2) The existence of $\cf^{\F}_m(X)$ (resp. $\cf^{\F}_g(X)$) follows
from Proposition~\ref{main_mon_F} (resp.
Proposition~\ref{main_gr_F}.) The rest is a consequence of the
previous item.
\smallskip

(3) This item follows directly from \cite[Theorem~6.4]{BRZ}. \hfill
$\Box$
\medskip

A sequence $\la U_n : n\in\w\ra$ is called  an \emph{$\w$-cover} of
a set $X$ if for every finite $F\subset X$ there exists $n\in\w$
such that $F\subset U_n$. If, moreover, there exists an increasing
sequence $\la n_k:k\in\w\ra$ of integers such that for every finite
$F\subset X$ and for all but finitely many $k\in\w$ there exists
$n\in [n_k, n_{k+1})$ such that $F\subset U_n$, then the cover  $\la
U_n : n\in\w\ra$ is called \emph{$\w$-groupable}.
\medskip

\noindent\textit{Proof of Corollary~\ref{main}.} In light of
Theorem~\ref{main_gr} it is enough to
verify the following:

\begin{claim}\label{tfr}
 If all
finite powers of an analytic topological group $G$ are
Menger monoids, then $G$ is $\sigma$-bounded\footnote{This fact
 can  be thought of as the analogue for
topological groups of the following result proven in \cite{Ar86}: if
for every sequence $\la u_n:n\in\w\ra$ of open covers of an analytic
space $X$ there exists a sequence $\la v_n:n\in\w\ra$ such that
$v_n\in [u_n]^{<\w}$ and $X=\bigcup_{n\in\w}\cup v_n$, then $X$ is
$\sigma$-compact.}.
\end{claim}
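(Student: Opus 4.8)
The plan is to reduce the group-theoretic statement about $\sigma$-boundedness to a statement about $\sigma$-compactness of an analytic space, and then invoke the Arhangel'skii-type theorem quoted in the footnote of Claim~\ref{tfr}. First I would recall that, by Guran's theorem (used already in the proof of Proposition~\ref{main_gr_F}), an $\w$-bounded topological group embeds into a product of second countable groups; but for an \emph{analytic} group we are in an even better situation, since an analytic group is separable and metrizable. The subtlety is that the Menger-monoid property of all finite powers of $G$ is a statement phrased using the \emph{two-sided} uniformity (neighborhoods $U_n$ of $1$ combined via $F_nU_n\cap U_nF_n$), whereas $\sigma$-boundedness is about total boundedness with respect to that same uniformity. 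So the heart of the matter is to translate ``all finite powers are Menger monoids'' into an open-cover statement to which \cite{Ar86} applies.

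Concretely, I would argue as follows. Assume all finite powers of the analytic group $G$ are Menger monoids. Fix a countable base $\{U_n:n\in\w\}$ of neighborhoods of $1$ in $G$. For each $k$, the covers of $G$ by the sets $\{xU_n\cap U_nx : x\in G\}$ (or more precisely, the covers witnessing the Menger-monoid property in $G^k$) should be rewritten as sequences of open $\w$-covers of $G$. The Menger-monoid hypothesis, applied across all finite powers simultaneously (using a standard diagonalization to combine the $k$-fold data into a single selection principle on $G$ itself), yields that $G$ satisfies the selection principle $\mathsf{S}_{fin}(\Omega,\Omega)$ for open $\w$-covers, equivalently $\mathsf{S}_{fin}(\mathcal O,\mathcal O)$ for ordinary open covers of all finite powers; this is exactly the hypothesis of the Arhangel'skii-type result cited in the footnote. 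That result then gives that $G$ (indeed each $G^k$) is $\sigma$-compact as a topological \emph{space}. Finally I would observe that a $\sigma$-compact subset of $G$ is in particular $\sigma$-bounded, since compact subsets are totally bounded with respect to any uniformity compatible with the topology, and in particular with respect to the two-sided uniformity of $G$; hence $G$ itself, being $\sigma$-compact, is $\sigma$-bounded, as required.

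The main obstacle I anticipate is the bookkeeping that converts the two-sided ``monoid'' selection data into genuine open-cover selection data. One must verify that the sets $F_nU_n\cap U_nF_n$ appearing in the definition of a Menger monoid really do form, after the appropriate reindexing, finite subfamilies of open $\w$-covers of $G$ (and of $G^k$), and conversely that a witness for $\mathsf{S}_{fin}(\mathcal O,\mathcal O)$ can be massaged back into a witness for the Menger-monoid property; the asymmetry between $F_nU_n\cap U_nF_n$ and the more naive $F_nU_n$ is what requires care, and is precisely the point where separate metrizability (rather than merely $\w$-boundedness) is used, so that the left and right uniformities have a common countable base. A secondary point to check is that the footnote's theorem from \cite{Ar86} is stated for analytic spaces and applies verbatim to $G$ and each of its finite powers $G^k$, which are again analytic. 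Once these translations are in place, the conclusion follows immediately, so I would keep the write-up short: state the reduction to \cite{Ar86}, do the one nontrivial lemma on rewriting the covers, and close with the remark that $\sigma$-compact implies $\sigma$-bounded.
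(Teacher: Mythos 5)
Your reduction breaks down at its first and essential step: the Menger monoid property is \emph{not} a selection principle for arbitrary open covers, but only for the uniform covers by translates $F_nU_n\cap U_nF_n$ of neighborhoods of $1$, and it does not imply $\mathsf{S}_{fin}(\Omega,\Omega)$, i.e.\ the topological Menger property of all finite powers. Indeed, every totally bounded group is trivially a Menger monoid in all finite powers (take each $F_n$ to be a finite set witnessing total boundedness for $U_n$), yet a totally bounded analytic group need not be $\sigma$-compact: the subgroup $H=\{(z_n)\in\mathbb{T}^\w: z_n\to 1\}$ of the compact metrizable group $\mathbb{T}^\w$ is dense (hence totally bounded), Borel, and $\Pi^0_3$-complete, hence not $F_\sigma$ in $\mathbb{T}^\w$ and therefore not $\sigma$-compact. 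Since Arkhangel'skii's theorem makes ``Menger in all finite powers'' equivalent to $\sigma$-compactness for analytic spaces, your intermediate conclusion that $G$ is $\sigma$-compact is false under the hypotheses of the Claim; this is precisely why the Claim asserts only $\sigma$-boundedness and why the footnote presents it as an \emph{analogue} of, not a corollary to, the result of \cite{Ar86}. The left/right bookkeeping you identify as the main obstacle is a side issue; the real difficulty is that no arbitrary-open-cover information is available at all.

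The paper's proof has to work harder for exactly this reason, and it stays entirely within the uniform setting. It first upgrades the hypothesis to the $[\U]$-Menger property for a single ultrafilter $\U$ (via \cite[Lemma~17]{Zd06} together with \cite[Prop.~3.1, Lemma~3.2]{BRZ}), so that the sets $B_n=F_nU_n\cap U_nF_n$ form a $[\U]$-cover of $G$; it then regroups these along intervals $[n_k,n_{k+1})$ into an $\w$-cover $\la D_k:k\in\w\ra$ by sets of the same uniform type $U_kF'_k\cap F'_kU_k$; it invokes \cite[Theorem~4.5]{Sa06} (see also \cite[Theorem~7]{Zd05}) to conclude that this $\w$-cover is $\w$-groupable --- this is where the analyticity of $G$ enters; and finally it reads off totally bounded pieces $Y_m=\bigcap_{l\ge m}\bigcup_{k\in[k_m,k_{m+1})}D_k$ covering $G$, each totally bounded because it is contained in $U_kF'_k\cap F'_kU_k$ for arbitrarily large $k$. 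To repair your outline you would have to replace the appeal to \cite{Ar86} by an argument of this kind; the cited theorem cannot be applied to $G$ because its hypothesis is simply not available.
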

\begin{proof}
Suppose that all finite powers of  $G$ are Menger monoids. By
applying \cite[Lemma~17]{Zd06} and \cite[Prop.~3.1, Lemma~3.2]{BRZ},
we can conclude that $G$ is $[\U]$-Menger for some ultrafilter $\U$.
Given a decreasing base $\la U_n:n\in\w\ra$ at the identity of $G$
we can  find a sequence $\la F_n:n\in\w\ra$ of finite subsets of $G$
such that $\la B_n=F_n U_n\cap U_n F_n:n\in\w\ra$ is an $[\U]$-cover
of $G$.  For every $g\in G$  denote
 the set $\{n\in\w:g\in B_n\}$
 by $\N_g$.

It follows that there exists an increasing number sequence $\la
n_k:k\in\w\ra$ such that $ \bigcup_{\N_g\cap [n_k,
n_{k+1})\neq\emptyset} [n_k, n_{k+1})\in\U  $ for all $g\in G$ (if
$\phi$ is a finite-to-one surjection witnessing for $\la
B_n:n\in\w\ra$ being an $[\U]$-cover, then the sequence $\la
\min\phi^{-1}(k)\ra_{k\in\w}$ is as required.) Let $F'_k$ be a
finite subset of $G$ such that $ D_k:= U_k F'_k\cap F'_k U_k\supset
\bigcup_{n\in [n_k, n_{k+1})} B_n. $ $\la D_k:k\in\w\ra$ is clearly
an $\w$-cover of $G$. Applying \cite[Theorem~4.5]{Sa06} (see also
\cite[Theorem~7]{Zd05}), we conclude that $\la D_k:k\in\w\ra$ is
$\w$-groupable.

Let $\la k_m:m\in\w\ra$ be an increasing number sequence witnessing
for this. Set $Y_m=\bigcap_{l\geq m}\bigcup_{k\in [k_m, k_{m+1})}
D_k$. A direct verification shows that  each $Y_m$ is totally
bounded and $G=\bigcup_{m\in\w} Y_m$. \hfill $\Box$
\end{proof}

\textit{Proof of Corollary~\ref{main_cor}.} Suppose that  $\U$ is a
pseudo-$P_{\mathfrak b^+}$-point. Since  $\phi(\U)$ is clearly a
pseudo-$P_{\mathfrak b^+}$-point for every finite-to-one $\phi$,
$\U$ is not coherent to a $Q$-point by  \cite[Theorem~13.8.1]{BZ}.
Therefore $\cf^\ast(\Sym(\kappa))\leq\mathfrak b(\U)$. It suffices
to apply the following result of Nyikos \cite{Ny84} (see
\cite[Proposition~5]{BM99} or \cite[Theorem~13.2.1,
Corollary~10.3.2]{BZ} for its proof): \textit{If  $\LL$ is
pseudo-$P_{\mathfrak b^+}$-point, then  $\mathfrak b(\LL)=\mathfrak
b$}. \hfill $\Box$
\medskip

\textit{Proof of Corollary~\ref{main_cor2}.} Let $\U$ be an
ultrafilter generated by  $\mathfrak u$ many subsets of $\w$. It is
well-known that  $\mathfrak b(\U)=\mathfrak d$ and $\U$ is coherent
to any ultrafilter $\F$ such that $\mathfrak b(\F)>\mathfrak u$, see
\cite[Theorem~10.3.1]{BZ} or \cite[Theorem~12]{BM99}. It suffices to
apply  Corollary~\ref{main} and
the transitivity of the coherence relation.
\hfill $\Box$
\medskip

\begin{lemma} \label{zau1}
 If $F\subset\w^\w$ is a finitely dominating family of strictly
increasing functions, then $\bigcup_{f\in F}S_f$ generates
$\Sym(\w)$.
\end{lemma}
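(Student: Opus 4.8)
The plan is to show that the subgroup $H:=\langle\bigcup_{f\in F}S_f\rangle$ of $\Sym(\w)$ equals $\Sym(\w)$, i.e.\ that every $\pi\in\Sym(\w)$ belongs to $H$. I would carry this out in two essentially independent steps. Call a permutation $\beta\in\Sym(\w)$ \emph{$\mathcal P$-diagonal} for an interval partition $\mathcal P=\{I_j:j\in\w\}$ of $\w$ into finite consecutive intervals if $\beta(I_j)=I_j$ for every $j$.

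\emph{Step 1: every $\mathcal P$-diagonal permutation lies in $H$} (this is where finite domination of $F$ is used). Given $\mathcal P=\{I_j\}$, let $\Psi\in\w^\w$ be the nondecreasing function sending each $n$ to $\max I_j$, where $I_j\ni n$. Pick $g_0,\dots,g_m\in F$ with $\Psi\leq^\ast\max\{g_0,\dots,g_m\}$; then for all but finitely many $j$ there is $c(j)\le m$ with $\max I_j=\Psi(\min I_j)\le g_{c(j)}(\min I_j)$. Write $\beta=\beta_0\cdots\beta_m\beta'$ as a product of permutations with pairwise disjoint supports, where $\beta_i$ is the restriction of $\beta$ to the union of the ``good'' blocks $I_j$ with $c(j)=i$ (identity elsewhere) and $\beta'$ is the restriction of $\beta$ to the finitely many remaining blocks. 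Then $\beta'$ is finitely supported, so $\beta'\in S_{g_0}$; and for each $i$: if $n$ lies in a good block $I_j$ with $c(j)=i$ then $\beta_i(n)\in I_j$, so $\beta_i(n)\le\max I_j\le g_i(\min I_j)\le g_i(n)$ (using that $g_i$ is strictly increasing, hence $g_i\geq\id$), while $\beta_i(n)=n\le g_i(n)$ otherwise; the same bounds hold for $\beta_i^{-1}$, so $\beta_i\in S_{g_i}$. Hence $\beta\in\langle S_{g_0},\dots,S_{g_m}\rangle\subseteq H$.

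\emph{Step 2: every $\pi\in\Sym(\w)$ is a product of two interval-diagonal permutations.} First choose, by a routine recursion, an interval partition $\{I_j:j\in\w\}$ with $I_0=[0,n_1)$, $I_{j+1}=[n_{j+1},n_{j+2})$, $n_0=0$, and $n_{j+1}$ so large that $[0,n_j)\cup\pi[0,n_j)\cup\pi^{-1}[0,n_j)\subseteq[0,n_{j+1})$; this makes $\pi$ \emph{banded}: $\pi(I_j)\cup\pi^{-1}(I_j)\subseteq I_{j-1}\cup I_j\cup I_{j+1}$ for all $j$ (with $I_{-1}=\emptyset$). Put $\mathcal Q_{\mathrm{ev}}=\{I_0\}\cup\{I_{2m-1}\cup I_{2m}:m\ge1\}$ and $\mathcal Q_{\mathrm{od}}=\{I_{2k}\cup I_{2k+1}:k\ge0\}$. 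The ``flow‑conservation'' equalities $|I_j\cap\pi^{-1}(I_{j+1})|=|I_{j+1}\cap\pi^{-1}(I_j)|$ (an easy induction on $j$ from bandedness and bijectivity of $\pi$, with base case at $I_0$) force $V_{2m-1}:=(I_{2m-1}\cup I_{2m})\cap\pi^{-1}(I_{2m-2}\cup I_{2m-1})$ to have exactly $|I_{2m-1}|$ elements. Let $\beta\in\Sym(\w)$ be any permutation with $\beta^{-1}(I_0)=I_0$, $\beta^{-1}(I_{2m-1})=V_{2m-1}$ and $\beta^{-1}(I_{2m})=(I_{2m-1}\cup I_{2m})\setminus V_{2m-1}$ for all $m\ge1$; then $\beta$ is $\mathcal Q_{\mathrm{ev}}$-diagonal, and a short computation using bandedness gives $\beta^{-1}(I_{2k}\cup I_{2k+1})=\pi^{-1}(I_{2k}\cup I_{2k+1})$, so $\pi\beta^{-1}$ maps each block of $\mathcal Q_{\mathrm{od}}$ onto itself, i.e.\ $\pi\beta^{-1}$ is $\mathcal Q_{\mathrm{od}}$-diagonal. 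Since $\mathcal Q_{\mathrm{ev}},\mathcal Q_{\mathrm{od}}$ are interval partitions into finite consecutive intervals, Step 1 yields $\pi=(\pi\beta^{-1})\cdot\beta\in H$.

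The genuinely routine parts are the recursion producing the banded partition and the cardinality bookkeeping. The step I expect to require the most care — the crux — is the explicit construction of $\beta$ in Step 2 together with the verification that it is simultaneously $\mathcal Q_{\mathrm{ev}}$-diagonal and factors $\pi$ through a $\mathcal Q_{\mathrm{od}}$-diagonal permutation; everything there rests on bandedness and the flow‑conservation identity, so once those are in place it reduces to tracking the block structure. (The purely group‑theoretic assertion of Step 2 — that $\Sym(\w)$ is generated by permutations each preserving some interval partition — may well be known and citable, but the explicit two‑factor version above is what the argument needs.)
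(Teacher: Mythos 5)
Your proof is correct, and it takes a genuinely different route from the paper's at the key reduction step. The paper also first uses finite domination to place a class of ``locally bounded'' permutations into $H$ --- namely all permutations with finite orbits, via a claim that the orbits can be covered by the blocks of two interval partitions (the same even/odd device that underlies your $\mathcal Q_{\mathrm{ev}}/\mathcal Q_{\mathrm{od}}$ splitting) --- but it then finishes non-constructively: the set $E$ of finite-orbit permutations is a dense $G_\delta$ in the Polish group $\Sym(\w)$, so $E\circ E=\Sym(\w)$ by the Baire Category Theorem. You instead prove the factorization explicitly: the banded partition, the flow-conservation identity $|I_j\cap\pi^{-1}(I_{j+1})|=|I_{j+1}\cap\pi^{-1}(I_j)|$, and the construction of $\beta$ give a concrete decomposition of an arbitrary $\pi$ into two interval-diagonal (hence finite-orbit) permutations. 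I checked the two delicate points: $|V_{2m-1}|=|I_{2m-1}|$ does follow from bandedness plus flow conservation at $j=2m-1$, and $\beta^{-1}(I_{2k}\cup I_{2k+1})=\pi^{-1}(I_{2k}\cup I_{2k+1})$ holds because $\pi^{-1}(I_{2k}\cup I_{2k+1})\subseteq I_{2k-1}\cup\dots\cup I_{2k+2}$; so both steps go through. Your argument is longer at the second step but entirely elementary and constructive (no Baire category, no topology on $\Sym(\w)$), and it yields the sharper explicit statement that every permutation is a product of two permutations each preserving an interval partition; the paper's argument is shorter but only gives existence of such a factorization through category.
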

\begin{proof}
Let $H=\la \bigcup_{f\in F}S_f\ra$ and
  $\pi\in\Sym(\w)$ be such that all its orbits
are finite, i.e. for every $n\in\w$ the set $\{\pi^k(n):k\in\w\}$ is
finite,
 where $\pi^1=\pi$ and $\pi^{k+1}=\pi\circ\pi^k$.
Let $\A=\{a_i:i\in\w\}$ be the enumeration of orbits of $\pi$ such
that $\min a_i<\min a_{i+1}$ for all $i$. The following claim is
obvious.

\begin{claim} \label{cl1}
 There exist two increasing sequences $\la n^0_i:i\in\w  \ra$ and $\la n^1_i:i\in\w\ra$
of natural numbers such that for every $a\in\A$ there exists a pair
$\la i,j\ra\in\w\times 2$ such that $a\subset [n^j_i,n^j_{i+1})$.
\end{claim}

Let $h\in\w^\w$ be an increasing function such that
$h(n^j_i)\geq\max\{\pi(m),\pi^{-1}(m):m\in [n^j_i,n^j_{i+1})\}$ for
all $i$ and $j$, and $F_0$ be a finite subset of $F$ such that
$h\leq^\ast\max F_0$. Fix any $a\in\A$ and find $\la
i,j\ra\in\w\times 2$ such that $a\subset [n^j_i,n^j_{i+1})$. Let
$f\in F_0$ be such that $f(n^j_i)>h(n^j_i)$. By the definition of
$h$ the above implies $\pi(m),\pi^{-1}(m)\leq h(n^j_i)\leq
f(n^j_i)\leq f(m)$ for every $m\in a$. Therefore for every $a\in \A$
there exists $f_a\in F_0$ such that $\pi(m),\pi^{-1}(m)<f_a(m)$ for
all $m\in a$. Set $\pi_f=\pi\uhr\bigcup\{a\in\A:f_a=f\}$ and note
that $\pi_f\in S_f$ and $\pi=\circ_{f\in F_0}\pi_f$ (the latter
composition obviously does not depend on the order in which we take
$\pi_f$'s). Hence $\pi\in H$.

$\Sym(\w)$ is easily seen to be a $G_\delta$-subset of $\w^\w$.
Therefore $\Sym(\w)$ with the topology $\tau$ inherited from $\w^\w$
is a Polish topological group. It is also easy to check that the set
$E$ of all permutations of $\w$ with finite orbits is  a dense
$G_\delta$ of $(\Sym(\w),\tau)$, and hence $E\circ E\supset
\Sym(\w)$ by the Baire Category Theorem. It suffices to note that $
E\circ E\subset H$.
\end{proof}

\medskip

\textit{Proof of Theorem~\ref{main1}.} The first statement is a
direct consequence of Lemma~\ref{zau1}: Suppose that
$\kappa=\cf^\ast(\Sym(\w))<\mathrm{cov}(\mathfrak D_{\mathit{fin}})$
and $\la G_\alpha:\alpha<\kappa\ra$ is an increasing sequence of
proper subgroups of $\Sym(\w)$ witnessing for that. Set
$B_\alpha=\{f\in\w^{\uparrow\w}: S_f\subset G_\alpha\}$. By the
definition of $\cf^\ast(\Sym(\w))$,
$\bigcup_{\alpha<\kappa}B_\alpha=\w^{\uparrow\w}$. Since
$\kappa<\mathrm{cov}(\mathfrak D_{\mathit{fin}})$, there exists
$\alpha<\kappa$ such that $B_\alpha$ is finitely dominating, which
by  Lemma~\ref{zau1} implies that $G_\alpha=\Sym(\w)$ and hence
contradicts the properness of $G_\alpha$.

The second one follows from the fact that
 NCF implies that $\mathrm{cov}(\mathfrak
D_{\mathit{fin}})=\mathfrak d$. Indeed, suppose that NCF holds. Then
  $\mathfrak b(\F)=\mathfrak d$ for all ultrafilters $\F$,
see e.g. \cite[Theorem~16]{Bl86} or \cite[12.3.1]{BZ}. In addition,
every not finitely dominating subset of $\w^{\uparrow\w}$ is
$\leq_\F$-bounded for every ultrafilter $\F$.
 \hfill
$\Box$

\section{Appendix}

Following the suggestion  of the referee, we include here from
\cite{BZ} an essentially self-contained proof of the fact that there
are no $Q$-points (in fact, rare ultrafilters) provided that
$\mathfrak r<\mathfrak s$. This is a direct consequence of
Corollary~\ref{col9_2_5} and Proposition~\ref{13.8.1} below.

The easiest way to do this   would be to simply  copy relevant
pieces of \cite{BZ}. But since the book \cite{BZ} is available
online, this  does not make much sense. Therefore we take another
approach and present a simplified proof. The simplification comes
mainly from the obvious equality $\F=\F^\perp$ which holds for all
ultrafilters. However, this simplification seems to hide  some
ideas.

In what follows $\mathfrak{F}r$ denotes the filter of cofinite
subsets of $\w$. By a \emph{semifilter} we mean a subset $\mathcal
S$ of $[\w]^\w$ which is closed with respect to taking supersets of
its elements and such that $S\cap A\in\mathcal S$ for all
$S\in\mathcal S$ and $A\in \mathfrak{F}r$. For a subset $\Psi$ of
$\w\times \w$ and $n\in\w$ we set $\Psi(n)= \{m\in\w:
(n,m)\in\Psi\}$ and $\Psi^{-1}(n)= \{m\in\w: (m,n)\in\Psi\}$.
$\Psi\subset\w\times\w$ is called a \emph{finite-to-finite
multifunction}, if $\Psi(n),\Psi^{-1}(n)$ are finite and nonempty
for all $n\in\w$. The family of all finite-to-finite multifunction
will usually be considered with the preorder $\subset^*$. A
semifilter $\mathcal S_0$ is said to be \emph{subcoherent} to a
semifilter $\mathcal S_1$, if there exists a finite-to-finite
multifunction $\Psi$ such that $\Psi(\mathcal S_0)\subset \mathcal
S_1$, where $\Psi(\mathcal S_0)=\{\Psi(S):S\in\mathcal S_0\}$ and
$\Psi(X)=\bigcup_{n\in X}\Psi(n)$ for all $X\subset\w$. Semifilters
$\mathcal S_0$ and $\mathcal S_1$ are called \emph{coherent}, if
each of them is subcoherent to the other  one. A direct verification
shows that the subcoherence relation is an equivalence relation. The
equivalence class of a semifilter $\mathcal S$ will be denoted by
$[\mathcal S]$. Each family $\B$ of infinite subsets of $\w$
generates a semifilter, namely the smallest semifilter $\la\B\ra$
containing $\B$\footnote{Note that in this appendix  the notation
$\la\cdot\ra$ has a different  meaning than in the main part of the
paper.}. Given a semifilter $\mathcal S$, we denote  by
$\mathrm{non}[\mathcal S]$  the smallest size of a  family
$\B\subset [\w]^\w$ such that $\la \B\ra$ is not subcoherent to
$\mathcal S$. For an ultrafilter $\F$ we denote by
$\mathrm{cov}[\F]$ the minimal size of a family $\mathsf S\subset
[\F]$ such that $\cap\mathsf S=\mathfrak{F}r$. The increasing
sequence of natural numbers whose range coincides with an infinite
subset $X$ of $\w$ will be denoted by $e_X$. An ultrafilter $\U$ is
called  \emph{rare} if the collection $\{e_F:F\in\F\}$ is
dominating. It is clear that every $Q$-point is rare and the
question whether the existence of a rare ultrafilter implies the
existence of a $Q$-point is open.

 The proof of the following statement is fairly
simple and can be found in the introductory part of \cite{BZ}.

\begin{proposition} \label{basic_facts}
\begin{enumerate}
\item For every finite-to-finite multifunction $\Psi$
there exists an increasing sequence $\la n_k:k\in\w\ra$ of natural
numbers with $n_0=0$ such that $\Psi(n)\subset [n_{k-1},n_{k+2})$
for all $n\in [n_k, n_{k+1})$. Therefore the cofinality of the
family of all finite-to-finite multifunctions equals $\mathfrak d$
and any family of finite-to-finite multifunctions of size
$<\mathfrak b$ has an upper bound.

\item $\mathrm{cov}[\F]\geq \mathfrak b$ and $\mathrm{non}[\F]\leq\mathfrak d$
for all ultrafilters $\F$.
\item Let $\mathcal S$ be a semifilter and $\F$ be a ultafilter.
Then $\mathcal S$ is subcoherent (resp. coherent) to $\F$ if and
only if there exists a monotone surjection $\psi:\w\to\w$ such that
$\psi(\mathcal S)\subset\psi (\F)$ (resp. $\psi(\mathcal S)=\psi
(\F)$).
\item The restriction to ultrafilters of the coherence relation on
the set of all semifilters coincides with the near coherence
relation on ultrafilters (see the definition after
Proposition~\ref{notcoh_q}.)
\end{enumerate}
\end{proposition}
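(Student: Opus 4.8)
The plan is to establish $(1)$ and $(3)$ first and then read off $(2)$ and $(4)$ from them. For $(1)$ I would build the interval sequence by a direct recursion: put $n_0=0$ and, given $n_k$, choose $n_{k+1}$ larger than $n_k$ and than every element of $\Psi(m)\cup\Psi^{-1}(m)$ for all $m\le n_k$. If $n\in[n_k,n_{k+1})$ then $\max\Psi(n)<n_{k+2}$ (since $n\le n_{k+1}$ and $n_{k+2}$ was chosen above $\max\Psi(m)$ for $m\le n_{k+1}$), while $\min\Psi(n)\ge n_{k-1}$ (if $m\in\Psi(n)$ and $m<n_{k-1}$ then $n\in\Psi^{-1}(m)$ forces $n<n_k$); hence $\Psi(n)\subseteq[n_{k-1},n_{k+2})$. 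Writing $\kc(\ell)$ for the index of the block $[n_k,n_{k+1})$ containing $\ell$, this says $|\kc(m)-\kc(n)|\le1$ whenever $m\in\Psi(n)$. Consequently the "interval multifunctions'' $n\mapsto I_{\kc(n)-1}\cup I_{\kc(n)}\cup I_{\kc(n)+1}$ attached to interval partitions are $\subseteq^*$-cofinal among all finite-to-finite multifunctions; since they are parametrised, order-preservingly, by the increasing functions listing the block endpoints, the cofinality of the whole family equals $\mathfrak d$ and every subfamily of size $<\mathfrak b$ has an upper bound, while the reverse inequality $\ge\mathfrak d$ follows from the multifunctions $n\mapsto\{n,h(n)\}$, whose upper bounds dominate.

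For $(3)$, the direction "$\Leftarrow$'' is immediate: given a monotone surjection $\psi$ with $\psi(\mathcal S)\subseteq\psi(\F)$, the multifunction $\Psi(n):=\psi^{-1}(\psi(n))$ is finite-to-finite, and if $S\in\mathcal S$ then $\psi(S)=\psi(T)$ for some $T\in\F$, so $\Psi(S)=\psi^{-1}(\psi(S))\supseteq T$ lies in $\F$. For "$\Rightarrow$'', suppose $\Psi(\mathcal S)\subseteq\F$, apply $(1)$ to $\Psi$ to obtain an interval partition $\langle I_k:k\in\w\rangle$ with $m\in\Psi(n)\Rightarrow|\kc(m)-\kc(n)|\le1$, and use that $\F$ is an ultrafilter to fix $r\in\{0,1,2\}$ with $\bigcup\{I_k:k\equiv r\ (\mathrm{mod}\ 3)\}\in\F$. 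Merge the $I_k$ into blocks $J_\ell:=I_{3\ell+r-1}\cup I_{3\ell+r}\cup I_{3\ell+r+1}$ — three consecutive $P$-blocks centred so that the residue-$r$ block is the \emph{middle} one — and let $\psi$ be the associated monotone surjection. Then for $S\in\mathcal S$ and $m\in\Psi(S)\cap\bigcup\{I_k:k\equiv r\}$, picking $n\in S$ with $m\in\Psi(n)$ forces $\kc(n)\in\{3\ell+r-1,3\ell+r,3\ell+r+1\}$, so $n\in J_\ell\cap S$, whence $J_\ell\subseteq\psi^{-1}(\psi(S))$ and $m\in\psi^{-1}(\psi(S))$; thus $\psi^{-1}(\psi(S))\supseteq\Psi(S)\cap\bigcup\{I_k:k\equiv r\}\in\F$, i.e. $\psi(S)\in\psi(\F)$. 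For the coherent variant, run $(1)$ once on $\Psi\cup\Phi$, where $\Phi(\F)\subseteq\mathcal S$ witnesses the reverse subcoherence, keep the common residue $r$, and observe that the same $\psi$ also gives $\psi(\F)\subseteq\psi(\mathcal S)$, since for $T\in\F$ one has $\psi^{-1}(\psi(T))\supseteq\Phi(T\cap\bigcup\{I_k:k\equiv r\})\in\mathcal S$; as $\psi(\mathcal S)$ and $\psi(\F)$ are semifilters with $\psi(\F)$ an ultrafilter, the two inclusions yield $\psi(\mathcal S)=\psi(\F)$.

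Part $(4)$ is then one line: for ultrafilters $\F,\U$, subcoherence of $\F$ to $\U$ means, by $(3)$, that $\psi(\F)\subseteq\psi(\U)$ for some monotone surjection $\psi$; since both images are ultrafilters this is $\psi(\F)=\psi(\U)$, which is exactly near coherence — a symmetric condition — so subcoherence, coherence and near coherence all coincide on ultrafilters. For $(2)$, to get $\mathrm{cov}[\F]\ge\mathfrak b$ take $\mathsf S=\{\mathcal S_\beta:\beta<\kappa\}\subseteq[\F]$ with $\kappa<\mathfrak b$, pick finite-to-finite $\Phi_\beta$ with $\Phi_\beta(\F)\subseteq\mathcal S_\beta$, let $\Phi$ be a $\subseteq^*$-upper bound by $(1)$; then $\Phi(T)\in\bigcap_\beta\mathcal S_\beta$ for all $T\in\F$, and splitting $\w$ along a pairwise disjoint sequence $\langle\Phi^{-1}(n_i):i\in\w\rangle$ and taking the piece lying in $\F$ produces $T\in\F$ with $\Phi(T)$ co-infinite, so $\bigcap\mathsf S\ne\mathfrak{F}r$. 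For $\mathrm{non}[\F]\le\mathfrak d$, fix a dominating family $\{g_\alpha:\alpha<\mathfrak d\}$; for each $\alpha$ let $P_\alpha$ be the interval partition with endpoints $\{g_\alpha(n)\}$ and $P'_\alpha$ a half-period shift of it, and put into $\B$, for each of the two, the union of every other block chosen so that its complement lies in $\F$. Given any interval partition $P$, pick $\alpha$ with $g_\alpha$ so fast that every $P_\alpha$- and $P'_\alpha$-block contains several $P$-blocks; then for each such $B$ the set $\bigcup\{I_k:I_k\cap B=\emptyset\}$ equals $\w\setminus B$ minus the $P$-blocks straddling the grid of $P_\alpha$ (respectively of $P'_\alpha$), and since those two "straddling'' sets are disjoint at most one is in $\F$, so the corresponding $B\in\B$ satisfies $\bigcup\{I_k:I_k\cap B=\emptyset\}\in\F$, witnessing via $(3)$ that $\langle\B\rangle$ is not subcoherent to $\F$.

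The step I expect to be the main obstacle is the forward direction of $(3)$: a naive coarsening of the interval partition from $(1)$ does not work, because a three-block window straddles the boundary of a coarse block, and "fattening a set by one block'' does not respect an ultrafilter. The device that makes it go through is to let the ultrafilter pick the residue class $r$ and then \emph{recentre} the coarse partition so that the $\F$-large residue-$r$ blocks sit in the interior of their coarse blocks, which confines the $\pm1$-spillover of $\Psi$. The same boundary bookkeeping reappears, in dual form, in the $\mathfrak d$-bound of $(2)$, where it is absorbed by the interleaved-grid trick above — or, alternatively, dealt with through the identity $\F=\F^\perp$ indicated in the text.
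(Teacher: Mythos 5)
The paper itself gives no proof of this proposition --- it defers to the introductory part of \cite{BZ} --- so there is no in-text argument to measure yours against; what you have written is, in substance, the standard proof, and it is correct. Two places are stated more loosely than they should be, though both are routine to repair. First, in (1), the passage from ``the three-block interval multifunctions are cofinal'' to ``the cofinality is $\mathfrak d$ and families of size $<\mathfrak b$ are bounded'' is not literally an order-preserving parametrisation by the endpoint functions: if $g$ merely $\le^*$-dominates the endpoint function of a partition $P$, the $g$-partition need not coarsen $P$ in the sense required for the associated three-block multifunctions to be $\subseteq^*$-comparable. You must pass to a partition each of whose blocks contains a \emph{full} $P$-block (e.g.\ by iterating a dominating function), after which the needed containment $[n_{k-1},n_{k+2})\subseteq[m_{\ell-1},m_{\ell+2})$ does follow whenever the two blocks meet. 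Second, in the $\mathrm{non}[\F]\le\mathfrak d$ half of (2), ``so fast that every block contains several $P$-blocks'' should be quantified: what you actually need is that no $I_k$ contains two consecutive endpoints of the interleaved grids, so that the two straddling sets have finite intersection and hence at most one of them lies in $\F$; again this is supplied by a dominating family closed under the appropriate iteration. Everything else --- the recentred mod-$3$ coarsening in (3), which is exactly the right device for the forward implication, running (1) once on $\Psi\cup\Phi$ to get the coherent variant, the disjoint-preimage splitting yielding a co-infinite $\Phi(T)$ in the $\mathrm{cov}[\F]\ge\mathfrak b$ bound, and the one-line deduction of (4) from (3) plus the fact that finite-to-one images of ultrafilters are ultrafilters --- is sound.
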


The following statement is a special case of
\cite[Theorem~9.2.5]{BZ}.

\begin{proposition}\label{9_2_5}
Suppose that $\F$ is an ultrafilter, $\mathsf C\subset [\F]$,
$|\mathsf C|<\mathrm{cov}[\F]$. Then for every family $\B\subset
[\w]^\w$ of size less than $\mathrm{cov}[\F]$ there exists a
monotone surjection $\psi:\w\to\w$ such that
$\psi(\B)\subset\psi(\bigcap\mathsf C)$.
\end{proposition}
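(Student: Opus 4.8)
\textit{Proof proposal.} The plan is to translate the conclusion into a statement about interval partitions of $\w$ and then to obtain the required monotone surjection by dodging a small family of ``forbidden'' configurations, each governed by a single member of the coherence class $[\F]$; the hypotheses $|\mathsf C|,|\B|<\mathrm{cov}[\F]$ enter only at the end, through the very definition of $\mathrm{cov}[\F]$. First I would fix the dictionary: a monotone surjection $\psi\colon\w\to\w$ is the same datum as a partition $\la I_k:k\in\w\ra$ of $\w$ into consecutive finite intervals with $\psi\equiv k$ on $I_k$; then $\psi(A)=\{k:A\cap I_k\ne\emptyset\}$, and the $\psi$-saturation $\psi^{-1}(\psi(A))=\bigcup\{I_k:A\cap I_k\ne\emptyset\}$ is the least union of intervals containing $A$. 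Since a semifilter is closed under supersets, $\psi(A)\in\psi(\mathcal S)$ iff this saturation belongs to $\mathcal S$, and one checks $\psi(\bigcap\mathsf C)=\bigcap_{C\in\mathsf C}\psi(C)$; hence the statement is equivalent to producing an interval partition whose associated $\psi$ sends every $B\in\B$ to a set whose saturation belongs to every $C\in\mathsf C$. Passing to orthogonals (write $\mathcal S^\perp=\{X\subset\w:\w\setminus X\notin\mathcal S\}$), using $(\bigcap\mathsf C)^\perp=\bigcup_{C\in\mathsf C}C^\perp$ and the identity $\F=\F^\perp$ --- the simplification announced before the statement, which spares one from carrying $\F$ and $\F^\perp$ separately as in \cite{BZ} --- this is in turn equivalent to finding $\psi$ so that for every $B\in\B$ the union of the intervals \emph{missed} by $B$ lies in none of the semifilters $C^\perp$, $C\in\mathsf C$, each of which is coherent to $\F$ by Proposition~\ref{basic_facts}(3).

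The construction then runs as follows. Coding a monotone surjection by the infinite set of left endpoints of its intervals (so that the space of candidates becomes $[\w]^\w$ up to an irrelevant finite shift), I would show that for each fixed pair $(B,C)\in\B\times\mathsf C$ the set $N_{B,C}$ of surjections failing the requirement for $(B,C)$ is contained in a semifilter coherent to $\F$ --- essentially a coherence-image of $C^\perp$, obtained after absorbing the interaction with the fixed set $B$ into a finite-to-finite multifunction built from the gap structure of $B$; Proposition~\ref{basic_facts}(1), on common upper bounds of fewer than $\mathfrak b$ multifunctions, makes this recoordinatisation uniform, and one also needs that $\perp$ and coarsening preserve the coherence class, so that $[\F]$ is $\perp$-closed. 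Granting this, the set of surjections failing for \emph{some} pair is covered by $|\B|\cdot|\mathsf C|<\mathrm{cov}[\F]$ semifilters from $[\F]$; were this union all of $[\w]^\w$, the orthogonals of those semifilters would form a subfamily of $[\F]$ of size $<\mathrm{cov}[\F]$ with intersection $\mathfrak{F}r$, contradicting the definition of $\mathrm{cov}[\F]$. Hence some $\psi$ avoids every forbidden configuration, and for that $\psi$ one has $\psi(\B)\subset\psi(\bigcap\mathsf C)$.

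The main obstacle is this ``absorption'' step. For an arbitrary infinite $B$ the set $N_{B,C}$ is not small in any soft sense --- a sparse $B$ makes the union of missed intervals large for very many partitions --- so one must genuinely check that, after recoordinatising by the multifunction tailored to $B$ (and independent of $C$), membership of $\psi$ in $N_{B,C}$ turns into a membership condition inside a fixed coherence-image of $C^\perp$, and that the family of these recoordinatisations is tame enough not to inflate the count beyond $\mathrm{cov}[\F]$. When $|\B|,|\mathsf C|<\mathfrak b$ the whole argument is easy, since a single sufficiently coarse $\psi$ makes every $B\in\B$ meet every interval; all the difficulty is concentrated in the range $\mathfrak b\le|\B|,|\mathsf C|<\mathrm{cov}[\F]$, and it is exactly there that the finer theory of semifilter coherence from \cite[\S9]{BZ} is needed. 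Specialising to an ultrafilter $\F$, where $\F=\F^\perp$, streamlines the bookkeeping but not this combinatorial core.
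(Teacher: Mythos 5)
Your high-level architecture is the right one and matches the paper's: code a monotone surjection by an interval partition, observe that $\psi(\bigcap\mathsf C)=\bigcap_{\C\in\mathsf C}\psi(\C)$, attach to each pair $(B,\C)\in\B\times\mathsf C$ a semifilter in $[\F]$ controlling which partitions are good for that pair, and then use $|\B|\cdot|\mathsf C|<\mathrm{cov}[\F]$ to find a single partition good for all pairs at once. But the proposal stops exactly at the step that carries all of the mathematical content, and you say so yourself: the ``absorption'' of the gap structure of $B$ into a finite-to-finite multifunction is flagged as ``the main obstacle'' and never performed. Until that is done there is no proof, because for a fixed pair $(B,\C)$ it is not at all automatic that the good (or bad) codes form, or are sandwiched by, a member of $[\F]$; that is precisely what has to be verified by hand. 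The paper does it concretely: it defines the semifilter
$$\C_B=\bigl\{X\in[\w]^\w:\exists C\in\C\;\forall a,b\in\w\setminus X\;\bigl([a,b)\cap C\neq\emptyset\to[a,b)\cap B\neq\emptyset\bigr)\bigr\},$$
notes that $X\in\C_B$ is exactly the statement that the partition with breakpoints $\w\setminus X$ satisfies $\psi(C)\subset\psi(B)$ for some $C\in\C$ (hence $\psi(B)\in\psi(\C)$), and then checks that the finite-to-finite multifunction $\Psi_B(n)=[n,\min(B\setminus[0,n))]$ satisfies $\Psi_B(\C)\subset\C_B$, so that $\la\Psi_B(\C)\ra\in[\F]$ sits inside the set of good $X$'s. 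A co-infinite $X$ in $\bigcap_{B,\C}\la\Psi_B(\C)\ra$, which exists by the definition of $\mathrm{cov}[\F]$, then finishes the argument. This is the lemma your plan presupposes; supplying it is the proof.

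Two further remarks on the route you sketch. First, the detour through orthogonals is unnecessary and costs you an extra unproved claim: to run the union/covering version of the counting argument you need $[\F]$ to be closed under $\mathcal S\mapsto\mathcal S^\perp$, which is true but is not among the facts listed in Proposition~\ref{basic_facts}; the paper avoids this entirely by working with intersections of the semifilters $\la\Psi_B(\C)\ra$ directly, so that only the bare definition of $\mathrm{cov}[\F]$ is invoked. Second, the appeal to Proposition~\ref{basic_facts}(1) to make the recoordinatisations ``uniform'' is not needed and would not help in the range $|\B|\ge\mathfrak b$ anyway: one does not take a common upper bound of the multifunctions $\Psi_B$; each pair contributes its own semifilter and the count is simply $|\B|\cdot|\mathsf C|<\mathrm{cov}[\F]$, with no inflation to control.
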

\begin{proof}
 For every $B\in\B$ and $\C\in\mathsf C$ we denote by $\C_B$ the semifilter consisting
of all infinite subsets $X$ of $\w$ such that
$$ \exists C\in\C\: \forall a,b\in\w\: (a,b\in\w\setminus X \wedge [a,b)\cap C\neq\emptyset \to [a,b)\cap B\neq\emptyset).   $$
Given an arbitrary $B\in\B$, consider the finite-to-finite
multifunction $\Psi_B:\w\Rightarrow\w$ assigning to each $n\in\w$
the interval $\Psi_B(n)=[n,\min (B\setminus[0,n))]$. Observe that
$\Psi_B(\C)\subset\C_B$ for all $\C\in \mathsf C$. Indeed, suppose
that $a,b\in\w\setminus\Psi_B(C)$ for some $C\in\C$ and $[a,b)\cap
C\neq\emptyset$. The inclusion $a\in\w\setminus\Psi_B(C)$ means that
$a\not\in C$ and $a>\min (B\setminus[0,n))$ for all $n<a$ with $n\in
C$. Similarly for $b$. Let $m\in C\cap [a,b)$. It follows from the
above that $\min (B\setminus[0,m))<b$, and hence $[a,b)\cap
B\neq\emptyset$. Therefore $C\in\C$ is a witness for $\Psi_B(C)$
being an element of $\C_B$.

Observe that the semifilter $\la\Psi_B(\C)\ra$ belongs to $[\F]$.
Since $|\B|,|\mathsf C|<\mathrm{cov}[\F]$, the intersection
$\bigcap\{\la\Psi_B(\C)\ra:B\in\B,\C\in\mathsf C\}$ contains a
co-infinite set $X$. Let $\la n_k:k\in\w\ra$ be an increasing
enumeration of $\w\setminus X$ and $\psi^{-1}(k)=[n_k,n_{k+1})$. We
claim that $\psi(\B)\subset\psi(\cap\mathsf C)$. Indeed, let us fix
$B\in\B$ and $\mathcal C\in\mathsf C$. Since
$X\in\la\Psi_B(\C)\ra\subset\C_B$, there exists $C\in C$ such that
$$\forall a,b\in\w\: (a,b\in\w\setminus X \wedge [a,b)\cap C\neq\emptyset \to [a,b)\cap B\neq\emptyset), $$
which means that $\psi(C)\subset\psi (B)$, and hence
$\psi(B)\in\psi(\C)$. Since $B$ and $\C$ are  arbitrary elements of
$\B$ and $\mathsf C$, respectively, our proof is completed.
\end{proof}

\begin{corollary} \label{col9_2_5}
Let $\F$ be an ultrafilter. Then
$\mathrm{non}[\F]\geq\mathrm{cov}[\F]$.
\end{corollary}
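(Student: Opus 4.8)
The plan is to read the inequality off Proposition~\ref{9_2_5} with essentially no extra work. Unwinding the definition of $\mathrm{non}[\F]$, it suffices to show that every family $\B\subset[\w]^\w$ with $|\B|<\mathrm{cov}[\F]$ generates a semifilter $\la\B\ra$ that is subcoherent to $\F$; then any $\B$ witnessing $\mathrm{non}[\F]$ must have $|\B|\geq\mathrm{cov}[\F]$, which is the claim.

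So the first step is to fix such a $\B$. Since $\mathrm{cov}[\F]\geq\mathfrak b>1$ by Proposition~\ref{basic_facts}(2), the singleton $\mathsf C=\{\F\}\subset[\F]$ satisfies $|\mathsf C|<\mathrm{cov}[\F]$, and $\bigcap\mathsf C=\F$. Applying Proposition~\ref{9_2_5} to this $\mathsf C$ and to $\B$ produces a monotone surjection $\psi:\w\to\w$ with $\psi(\B)\subset\psi(\F)$.

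The remaining step is to promote $\psi(\B)\subset\psi(\F)$ to $\psi(\la\B\ra)\subset\psi(\F)$ and then invoke Proposition~\ref{basic_facts}(3). This promotion is routine: every $X\in\la\B\ra$ contains $B\cap A$ for some $B\in\B$ and some cofinite $A$; as a monotone surjection $\psi$ is finite-to-one, so $\psi[B\cap A]$ and $\psi[B]$ differ by a finite set, whence $\psi[X]\supseteq^\ast\psi[B]\in\psi(\F)$ and therefore $\psi[X]\in\psi(\F)$, because $\psi(\F)$ is a semifilter. By Proposition~\ref{basic_facts}(3) this exhibits $\la\B\ra$ as subcoherent to $\F$, and the argument is complete.

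I do not expect a genuine obstacle: all the content sits in Proposition~\ref{9_2_5}, which is already available, and the corollary is just the bookkeeping that converts its conclusion into a statement relating $\mathrm{non}[\F]$ and $\mathrm{cov}[\F]$. The only point deserving (minimal) care is the passage from the family $\B$ to the generated semifilter $\la\B\ra$, and that is immediate once one observes that $\psi$ is finite-to-one.
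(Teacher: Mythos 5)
Your proof is correct and is exactly the derivation the paper intends: it states Corollary~\ref{col9_2_5} without proof as an immediate consequence of Proposition~\ref{9_2_5}, and your instantiation $\mathsf C=\{\F\}$ followed by the routine upgrade from $\psi(\B)\subset\psi(\F)$ to $\psi(\la\B\ra)\subset\psi(\F)$ (using that $\psi$ is finite-to-one and $\psi(\F)$ is a semifilter) is the standard way to fill in that bookkeeping.
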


The following proposition is a special case of
\cite[Theorem~13.8.1]{BZ}.

\begin{proposition} \label{13.8.1}
Let $\F$ be a rare ultrafilter. Then
\begin{enumerate}
 \item $\mathrm{non}[\F]\leq \mathfrak r$; and
\item $\mathrm{cov}[\F]\geq\mathfrak s$.
\end{enumerate}
\end{proposition}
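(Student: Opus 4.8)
The plan is to prove the two mutually dual items in parallel; I describe item~(1), the inequality $\mathrm{non}[\F]\le\mathfrak r$, and then indicate what changes for item~(2), $\mathrm{cov}[\F]\ge\mathfrak s$, the passage between the two being streamlined by the equality $\F=\F^\perp$. The first step for~(1) is to reformulate ``$\la\B\ra$ is not subcoherent to $\F$'' in interval language. By Proposition~\ref{basic_facts}(3), $\la\B\ra$ is subcoherent to the ultrafilter $\F$ if and only if there is a monotone surjection $\psi\colon\w\to\w$ with $\psi(B)\in\psi(\F)$ for every $B\in\B$; since $\psi^{-1}(\psi(B))$ is precisely the union of those blocks $\psi^{-1}(k)$ meeting $B$, this says $\bigcup\{\psi^{-1}(k):\psi^{-1}(k)\cap B\neq\emptyset\}\in\F$ for all $B\in\B$. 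Hence it suffices to produce $\B\subset[\w]^\w$ with $|\B|\le\mathfrak r$ such that for every interval partition $\la I_k:k\in\w\ra$ of $\w$ some $B\in\B$ satisfies $\bigcup\{I_k:I_k\cap B=\emptyset\}\in\F$, i.e.\ some $B\in\B$ misses an $\F$-positive union of blocks.

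The second ingredient is rareness, used to manufacture $\F$-positive unions of blocks carried by very sparse index sets. Given an interval partition $\la I_k\ra$ and a fast-growing $h\in\w^\w$, apply the characterisation of rare ultrafilters (namely: for every interval partition there is a member meeting each block in at most one point) to the coarsening whose $l$-th block is $\bigcup_{h(l)\le k<h(l+1)}I_k$; this yields $F\in\F$ whose block-support $D=\{k:I_k\cap F\neq\emptyset\}$ meets each interval $[h(l),h(l+1))$ in at most one point, so $D$ is as sparse as desired, while $\bigcup_{k\in D}I_k\supseteq F$ lies in $\F$. Consequently, whenever $B\in\B$ has $\mathrm{supp}(B)=\{k:I_k\cap B\neq\emptyset\}$ almost disjoint from $D$, we get $\bigcup\{I_k:I_k\cap B=\emptyset\}\supseteq^\ast\bigcup_{k\in D}I_k\in\F$ (the finite error being absorbed since $\F$ is non-principal). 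So the combinatorial heart of~(1) is: fix in advance a reaping family $\mathcal R=\{R_\alpha:\alpha<\mathfrak r\}$, and show that for every interval partition one can choose the coarsening $h$ and an index $\alpha$ so that $\mathrm{supp}(R_\alpha)$ is almost disjoint from the resulting sparse set $D$; then $\B=\mathcal R$ works.

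The delicate point — which I expect to be the main obstacle — is that the reaping property only hands us some $\alpha$ with $\mathrm{supp}(R_\alpha)\subseteq^\ast D$ \emph{or} $\mathrm{supp}(R_\alpha)\cap D$ finite, and the first alternative must be ruled out. The intended remedy is to normalise $\mathcal R$ before fixing it, replacing an arbitrary reaping family by its image under a suitable ``blow-up'' along a fixed interval partition with blocks of growing length: this keeps the family reaping while forcing every member — hence, relative to \emph{any} interval partition $\la I_k\ra$, every set $\mathrm{supp}(R_\alpha)$ — to contain arbitrarily long runs of consecutive integers, so that it cannot be almost contained in any set meeting each $[h(l),h(l+1))$ at most once; one then chooses $h$, after inspecting $\la I_k\ra$, so fast that $D$ has exactly this ``one point per long interval'' form, leaving only the useful alternative. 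With this in place, $\B=\mathcal R$ witnesses $\mathrm{non}[\F]\le\mathfrak r$. For item~(2) one runs the dual argument: from $\mathsf S\subset[\F]$ with $\bigcap\mathsf S=\mathfrak{F}r$ and $|\mathsf S|<\mathfrak s$, using that each $\mathcal S\in\mathsf S$ is coherent to $\F$ via a monotone surjection (Proposition~\ref{basic_facts}(3)) together with the sparse selectors supplied by rareness, one extracts a family of fewer than $\mathfrak s$ subsets of $\w$; this family cannot be splitting, so some infinite and co-infinite $X$ is left unsplit, and pulling $X$ back through the coherence surjections places a finite modification of it in every member of $\mathsf S$, contradicting $\bigcap\mathsf S=\mathfrak{F}r$. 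The equality $\F=\F^\perp$ is precisely what makes this second argument a faithful mirror of the first.
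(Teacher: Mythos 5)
There is a genuine gap, and it sits exactly at the point you flag as delicate, plus one point you do not flag at all. First, the ``characterisation of rare ultrafilters'' you invoke --- for every interval partition there is a member meeting each block in at most one point --- is the \emph{$Q$-point} property, not rareness. In this paper ``rare'' means only that $\{e_F:F\in\F\}$ is dominating; every $Q$-point is rare, and whether the converse holds is explicitly stated to be open. The entire purpose of the appendix is to exclude \emph{rare} ultrafilters under $\mathfrak r<\mathfrak s$, so an argument that consumes the $Q$-point selector property proves a strictly weaker statement. What rareness actually gives you, after fixing $f$, is an $F\in\F$ with $|F\cap[0,f(n))|\le n$ for large $n$ --- a counting (thinness) condition, not a selector --- and your sparse set $D$ has to be manufactured from that. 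Second, the normalisation you propose to kill the bad reaping alternative does not work: blowing $\mathcal R$ up along a fixed interval partition makes each $R_\alpha$ contain long runs of consecutive \emph{integers}, but against an adversarial partition $\la I_k\ra$ whose blocks grow faster than those runs, a run of length $L$ can sit inside one or two blocks, so $\mathrm{supp}(R_\alpha)$ need contain no long runs of consecutive \emph{indices} and can perfectly well be almost contained in a sparse $D$. A repair is possible --- reduce to $\mathfrak r<\mathfrak d$ (legitimate since $\mathrm{non}[\F]\le\mathfrak d$), use non-domination of the $\mathfrak r$-many functions $n\mapsto e_{\mathrm{supp}(R_\alpha)}(2n)$ to choose $h$, and then use the counting form of rareness to make $|D\cap[0,h(n))|\le n$ while $|\mathrm{supp}(R_\alpha)\cap[0,h(n))|\ge 2n$ infinitely often --- but that is a different argument from the one you wrote, and the reduction to $\mathfrak r<\mathfrak d$ is not optional.

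For item (2) the sketch is too thin to count as a proof: you do not say why the extracted family has size $<\mathfrak s$ (one needs $\mathrm{cov}[\F]\ge\mathfrak b$ and the reduction to $\mathfrak b<\mathfrak s$, with each $\mathcal S\in\mathsf S$ contributing only $\mathfrak b$ many sets), and the claim that an unsplit $X$ can be pulled back into \emph{every} member of $\mathsf S$ is unjustified --- the contradiction should come from a single $\mathcal S$ with $\w\setminus X\notin\mathcal S$, against which $X$ becomes a pseudointersection of a subfamily chosen in advance to have none. That subfamily is where rareness is genuinely used: a $\mathfrak b$-scale $\la f_\alpha\ra$ and sets $F_\alpha\in\F$ with $e_{F_\alpha}\ge^* f_\alpha$ give $\mathfrak b$ many members of $\F$ without infinite pseudointersection. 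Note also that the paper's route for item (1) is entirely different from yours: it takes an ultrafilter $\U$ with a $\pi$-base $\B$ of size $\mathfrak r$ (Balcar--Simon), observes that any semifilter coherent to a rare ultrafilter needs at least $\mathfrak d$ generators because monotone images of rare ultrafilters are rare, and concludes under $\mathfrak r<\mathfrak d$ that $\U$, hence $\la\B\ra$, is not subcoherent to $\F$. That argument avoids all of the interval combinatorics your version struggles with.
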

\begin{proof}
1. By the inequality $\mathrm{non}[\F]\leq \mathfrak d$ we may
assume $\mathfrak r<\mathfrak d$. Since $\F$ is rare, so is
$\psi(\F)$ for any monotone surjection $\psi:\w\to\w$. Applying
Proposition~\ref{basic_facts}(3) we conclude that   no semifilter
${\mathcal S}\in [\F]$ can be generated by fewer than $\mathfrak d$
sets. Let $\U$ be an ultrafilter with $\U\subset\la\B\ra$ for some
$\B\subset [\w]^\w$ with $|\B|=\mathfrak r$. It follows from the
above
 that
$\U\not\in [\F]$,  hence $\U$ is not subcoherent to $\F$, and
consequently $\la B \ra$ is neither subcoherent to $\F$. This yields
$\mathrm{non}[\F]\leq |\B|=\mathfrak r$.

2. First we  show that there exists a subfamily $\B\subset\F$ of
size $|\B|=\mathfrak b$ without an infinite pseudointersection.
Indeed, let $\la f_\alpha:\alpha<\mathfrak b\ra$ be a $\mathfrak
b$-scale, i.e. an increasing and unbounded with respect to $\leq^*$
sequence. Since $\F$ is rare, for every $\alpha$ there exists
$F_\alpha\in\F$ such that $e_{F_\alpha}\geq^* f_\alpha$. If $X\in
[\w]^\w$ is such that $X\subset^*F_\alpha$ and
$F_\alpha\not\subset^* X$, then $e_X\geq^* f_\alpha$, and hence the
existence of an infinite pseudointersection of $\la
F_\alpha:\alpha<\mathfrak b\ra$ would contradict the unboundedness
of $\la f_\alpha:\alpha<\mathfrak b\ra$.

Thus for every semifilter ${\mathcal S}\in [\F]$ there exists a
subfamily ${\mathcal S}'\in [{\mathcal S}]^{\mathfrak b}$ without an
infinite pseudointersection.

Since $\mathrm{cov}[\F]\geq \mathfrak b$, we can assume that
$\mathfrak s>\mathfrak b$. We proceed in the same way as in
\cite[Theorem~9.2.7(7)]{BZ}. Set $\lambda=\mathrm{cov}[\F]$ and find
a family  $\mathsf S\subset[\F]$ such that $|\mathsf S|=\lambda$ and
$\cap\mathsf S=\mathfrak{F}r$. For every ${\mathcal S}\in\mathsf S$
find $\B_{\mathcal S}\subset{\mathcal S}$ of size $|\B_{\mathcal
S}|=\mathfrak b$ such that $\B_{\mathcal S}$ has no infinite
pseudointersection. It suffices to prove that $\bigcup\{\B_{\mathcal
S}:{\mathcal S}\in\mathsf S\}$ is a splitting family. Indeed, let us
fix $X\in [\w]^\w$. Since $\w\setminus X\not\in\mathfrak{F}r$, there
exists ${\mathcal S}\in\mathsf S$ such that $\w\setminus
X\not\in{\mathcal S} $, and hence $B\not\subset^* \w\setminus X$ for
all $B\in\B_{\mathcal S}$. In other words, all elements of
$\B_{\mathcal S}$ have infinite intersection with $X$. If none of
the elements of $\B_{\mathcal S}$ splits $X$, we get that
$X\subset^* B$ for all $B\in\B_{\mathcal S}$, which contradicts our
choice of $\B_{\mathcal S}$. Therefore $X$ is split by some element
of $\B_{\mathcal S}$, and hence $\bigcup\{\B_{\mathcal S}:{\mathcal
S}\in\mathsf S\}$ is a splitting family, which completes our proof.
\end{proof}

\end{document}